\documentclass[11pt,oneside]{amsart}
\usepackage[foot]{amsaddr}
\usepackage{bm, calc,geometry, verbatim, graphicx, amssymb, color, mathabx, mathtools, enumitem, bm, mathrsfs, amsmath, amsthm, ulem, xspace,tabularx }
\usepackage[usenames,dvipsnames,svgnames,table]{xcolor}
\usepackage[pdftex,bookmarks,colorlinks,breaklinks]{hyperref} 

\hypersetup{linkcolor=blue,citecolor=red,filecolor=dullmagenta,urlcolor=darkblue} 
\newcommand\brho{\operatorname{\boldsymbol{\rho}}}

\newtheorem{theorem}{Theorem}[section]
\newtheorem{corollary}[theorem]{Corollary}
\newtheorem{lemma}[theorem]{Lemma}

\newtheorem{remark}[theorem]{Remark}

\newtheorem{question}{Problem} 
\newcommand{\Mod}[1]{\ (\mathrm{mod}\ #1)}

\newcommand\mult{\operatorname{\textup{{\fontfamily{ptm}\selectfont mult}}}}
\newcommand\dg{\operatorname{\textup{{\fontfamily{ptm}\selectfont deg}}}}

\usepackage{physics}

\newcommand\roundup[1]{\left\lceil#1\right\rceil}
\newcommand\rounddown[1]{\left\lfloor#1\right\rfloor}

      \makeatletter 
      \def\@setcopyright{}
      \def\serieslogo@{}
      \makeatother      

\begin{document}
   \author{Amin Bahmanian}   \address{Department of Mathematics,
  Illinois State University, Normal, IL USA 61790-4520}
  \email{mbahman@ilstu.edu}
  
   \author{Anna Johnsen-Yu}
  \address{Department of Mathematics and Statistics,  Georgia State University, Atlanta, GA USA 30303-2918}
  \address{Department of Mathematics, Vanderbilt University, Nashville, TN USA 37240-0001}
  \email{anna.c.yu@vanderbilt.edu}

\title[The Andersen-Hoffman Theorem for Equitable Rectangles] {The Andersen-Hoffman Theorem for Equitable Rectangles}

\dedicatory{In memory of Dean Gunnar Hoffman (1949 -- 2022)}

\begin{abstract}  
More than forty years ago, Andersen and Hoffman independently proved that every symmetric Latin rectangle can be extended to a symmetric Latin square with prescribed diagonal entries. We generalize this theorem as follows. Let $k\le n^2$, and let $M$ be an $n\times n$ array whose top-left $r\times r$ subarray is filled with symbols from $\{1,2,\ldots,k\}$. Suppose that, for each $i\in\{1,\ldots,r\}$ and each symbol, the number of occurrences of that symbol in row $i$ equals its number of occurrences in column $i$, and that each remaining diagonal entry is either empty or already contains a symbol from $\{1,\ldots,k\}$. We establish necessary and sufficient conditions for completing $M$ so that the resulting array is symmetric off the prescribed $r\times r$ subarray, each symbol occurs a specified total number of times in $M$, and, for every symbol, its numbers of occurrences in any two rows (respectively, columns) differ by at most one.

Restricted to symmetric arrays, our theorem generalizes results of Cruse (1974), Goldwasser et al.\ (2015), and Bahmanian and Hilton (2025). It also extends Baranyai's theorem for complete graphs (1973) by characterizing when a partial coloring of $K_r$ with a loop on every vertex can be extended to an almost regular coloring of $K_n$ with a loop on every vertex, where $n\ge r$.
\end{abstract}

   \subjclass[2010]{05B15}
   \keywords{Latin Square, Embedding, Cruse's Theorem, Andersen-Hoffman's Theorem, Ryser's Theorem, Equitable Rectangles}

   \maketitle   
   
\section{Introduction} 
An {\it $n\times n$ partial Latin square} is simply an $n\times n$ array such that each entry is either empty or contains a symbol $\ell\in\{1,\dots,n\}$ such that no symbol appears more than once in any row or column of the array. The broad question we are considering is this: under what conditions is it possible to extend an $n\times n$ partial Latin square to an $n\times n$ Latin square? 
Completing partial Latin squares has been shown to be NP-complete \cite{MR0739595}. In fact, even when we are restricted to symmetric arrays, the question is NP-complete \cite{MR0704260}. Nonetheless, progress has been made on this problem for partial Latin squares in which a rectangular subarray is completely filled and the remaining cells are empty.

In order to discuss this progress and state our main results, we include several definitions. In this paper, we always assume the following.
\begin{align*}
    n,k\in\mathbb{N}, && r\in\mathbb{N}\cup\{0\}, && r\leq n, && 1\leq k\leq n^2, && [n]:=\{1,\dots,n\}.
\end{align*}
Let $M$ be an $r\times r$ array whose cells $(i,j)$ each contain at most one symbol from the set $[k]$ of symbols for  $i\in[r],j\in[r]$. We refer to the symbol in cell $(i,j)$ of $M$ as $M(i,j)$ for $i\in[r], j\in[r]$, and define the following notation.
\begin{itemize}
    \item The number of occurrences of the symbol $\ell$ in $M$ is denoted by $|M_\ell|$ for $\ell\in[k]$.
    \item The number of occurrences of the symbol $\ell$ in a fixed row $i$ of $M$ and in a fixed column $j$ of $M$ are denoted by $|M_\ell^i|$ for $i\in[r], \ell\in[k]$ and by $|^jM_\ell|$ for $j\in[r],\ell\in[k]$, respectively.
\end{itemize}
Let $\brho = (\rho_1,\dots,\rho_k)$ be a $k$-tuple such that the following hold.
\begin{align*}
    1\leq \rho_\ell\leq n^2 \text{ for }\ell\in[k], && \sum_{\ell\in [k]} \rho_\ell=n^2.
\end{align*}
An $r\times r$ array $M$ is an 
{\it equitable $\brho$-Latin rectangle} if
\begin{itemize}
    \item Every cell in $M$ contains exactly one symbol $\ell\in[k]$,
    \item $|M_\ell|\leq \rho_\ell$ for $\ell\in[k]$, 
    \item $|M_\ell^i|\leq \roundup{\dfrac{\rho_\ell}{n}}$ for $\ell\in[k],i\in[r]$, and
    \item $|^jM_\ell|\leq \roundup{\dfrac{\rho_\ell}{n}}$ for $\ell\in[k],j\in[r]$.
\end{itemize}
An $n\times n$ array $M$ is an {\it equitable $\brho$-Latin square} if
\begin{itemize}
    \item Every cell in $M$ contains exactly one symbol $\ell\in[k]$,
    \item $|M_\ell| = \rho_\ell$ for $\ell\in[k]$, 
    \item $\rounddown{\dfrac{\rho_\ell}{n}}\leq |M_\ell^i| \leq \roundup{\dfrac{\rho_\ell}{n}}$ for $\ell\in[k],i\in[n]$, and
    \item $\rounddown{\dfrac{\rho_\ell}{n}}\leq |^jM_\ell| \leq \roundup{\dfrac{\rho_\ell}{n}}$ for $\ell\in[k],j\in[n]$.
\end{itemize}
If $\rho_\ell\leq n$ for $\ell\in[k]$, we call an equitable $\brho$-Latin rectangle (square) a {\it $\brho$-Latin rectangle (square)}, and when $\rho_\ell=n$ for $\ell\in[k]$, we call an equitable $\brho$-Latin rectangle (square) a {\it Latin rectangle (square)}. We note that a Latin rectangle can be viewed as a partial Latin square in which the top left subarray is entirely filled and the remainder of the square is empty.

An $r\times r$ array $M$ is {\it externally symmetric} if $|M_\ell^i| = |^iM_\ell|$ for $i\in[r]$, and is {\it symmetric} if it is symmetric with respect to the main diagonal. Note that symmetric arrays are externally symmetric.
Let $$N = \begin{bmatrix}
    M & A\\
    B & C
\end{bmatrix},$$
where $M$ is an $r\times r$ array, $A$ is an $r\times (n-r)$ array, $B$ is an $(n-r)\times r$ array, and $C$ is an $(n-r)\times(n-r)$ array.
If the cells in $N$ are filled so that $B=A^T$ and $C$ is symmetric with respect to its main diagonal, we say that $N$ is {\it symmetric off $M$}, where the array $M$ may or may not be symmetric. Note that if an $r\times r$ externally symmetric array $M$ is extended to an $n\times n$ array $N$ which is symmetric off $M$, then $N$ is also externally symmetric. 
Now let $d_1,\dots,d_k$ be integers such that \begin{align*}
    d_\ell\geq 0 \text{ for }\ell\in[k],&& \sum_{\ell\in[k]}d_\ell\leq n-r.
\end{align*}
We define the $k$-tuple $\vb*{d}:=(d_1,\dots, d_k)$ to be a {\it diagonal tail}.
If the cells in $N$ are filled so that there are at least $d_\ell$ occurrences of $\ell$ on the main diagonal of $C$ for $\ell\in[k]$, then we say that $N$ has the {\it prescribed diagonal tail $\vb*{d}$}.  We refer to the diagonal tail as the {\it diagonal} when $r=0$ and we say that an $n\times n$ array has the prescribed diagonal $\vb*{d}$ if there are at least $d_\ell$ occurrences of $\ell$ on its main diagonal. 
We note that by permuting the rows and corresponding columns of $N$, we may prescribe the order in which symbols appear on the diagonal in $C$ without changing the number of occurrences of any symbol in each row and column and without changing the number of occurrences of any symbol on the main diagonal. Hence, in order to extend an $r\times r$ array $M$ to an externally symmetric equitable $\brho$-Latin square $N$ that is symmetric off $M$ in which the order of the symbols in the diagonal tail $\vb*{d}$ are prescribed on the main diagonal of $N$ outside of $M$, it suffices to extend $M$ to an externally symmetric equitable $\brho$-Latin square that is symmetric off $M$ and has the diagonal tail $\vb*{d}$.

The study of Latin square completion dates back to Hall's theorem for extending Latin rectangles published in 1945 \cite{MR13111} and Ryser's well-known theorem for extending Latin rectangles to Latin squares published in 1951 \cite{MR42361}. 
In the 1970s, Cruse established conditions under which an $r\times r$ symmetric Latin rectangle can be extended to an $n\times n$ symmetric Latin square \cite{MR329925}.
In the 1980s, Andersen, Hilton, and Rodger published results on extending Latin rectangles (not necessarily symmetric) with prescribed diagonal tails to Latin squares \cite{MR667240}. Andersen and Hilton \cite{ANDERSEN1980125,ANDERSEN1980235} consider the construction, decomposition, and embedding of generalized Latin rectangles. 
Andersen and Hoffman also independently established conditions under which symmetric Latin rectangles with prescribed diagonal tails may be extended to Latin squares. Hoffman's theorem is as follows. 
\begin{theorem}\cite{MR694466} \label{thm:hoffman}
    Let $\vb*{d}=(d_1,\dots,d_n)$
    such that $\sum_{\ell\in[k]}d_\ell = n-r$. Then an $r\times r$ symmetric Latin rectangle $M$ on $[n]$ can be extended to an $n\times n$ symmetric Latin square on $[n]$ with the prescribed diagonal tail $\vb*{d}$ if and only if the following hold.
    \begin{align*}
        &|M_\ell|\geq 2r-n + d_\ell &&\text{ for } \ell\in[n],\\
        &|M_\ell| + d_\ell \equiv n\Mod 2 &&\text{ for } \ell\in[n].
    \end{align*}
\end{theorem}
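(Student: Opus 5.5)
Necessity is a counting argument. Fix $\ell\in[n]$ and suppose $N$ is a symmetric Latin square extending $M$ with diagonal tail $\vb*{d}$. Write $N=\begin{bmatrix} M & A\\ A^T & C\end{bmatrix}$.

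First, symbol $\ell$ occurs once in each of the $r$ rows of $[M\ A]$. So it occurs exactly $r-|M_\ell|$ times in $A$, and by symmetry the same number of times in $A^T$. It occurs $n$ times in $N$ in total, so
$$|C_\ell|=n-|M_\ell|-2(r-|M_\ell|)=n-2r+|M_\ell|.$$
Since $C$ contains at least $d_\ell$ copies of $\ell$ on its diagonal, we get $|M_\ell|\ge 2r-n+d_\ell$. Here we use $\sum d_\ell=n-r$, which forces the diagonal of $C$ to contain exactly $d_\ell$ copies of $\ell$.

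Second, by symmetry the off-diagonal occurrences of $\ell$ in $C$ come in pairs. Hence $|C_\ell|\equiv d_\ell \pmod 2$, which gives $|M_\ell|+d_\ell\equiv n \pmod 2$.

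For sufficiency I would pass to graphs. A symmetric Latin square on $[n]$ is a proper $n$-edge-colouring of $K_n$ with a loop at every vertex, where each loop contributes $1$ to the degree. Then $M$ is such a colouring of $K_r$ with loops. Extending means adding vertices $r+1,\dots,n$ one at a time, i.e.\ building $A$ column by column and then $C$.

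The plan is induction on $n-r$: extend $M$ to an $(r+1)\times(r+1)$ symmetric Latin rectangle $M'$ whose new diagonal entry is some symbol $\ell_0$ with $d_{\ell_0}>0$. The new tail is $\vb*{d}'$, obtained from $\vb*{d}$ by decreasing $d_{\ell_0}$ by one. We must choose the new row (which equals the new column) so that $M'$ again satisfies both hypotheses with $r+1$ in place of $r$.

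Let $m_\ell=|M_\ell|-(2r-n+d_\ell)\ge 0$; this is even by the parity condition. Adding the new row, with symbol $\ell$ placed off-diagonally $t_\ell\in\{0,1\}$ times in it, gives $|M'_\ell|=|M_\ell|+2t_\ell+[\ell=\ell_0]$. The condition for $M'$ becomes
$$m_\ell+2t_\ell\ge 2,$$
after checking how $d_{\ell_0}$ changes. So every symbol with $m_\ell=0$ must appear in the new row, and the parity condition is automatically preserved.

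The new row is therefore a system of distinct representatives: cell $(i,r+1)$ for $i\in[r]$ needs a symbol missing from row $i$ of $M$, all $r$ entries distinct, different from $\ell_0$, and covering every symbol with $m_\ell=0$. This is a bipartite matching in the graph joining rows $i$ to symbols absent from row $i$. That graph is regular on the row side with degree $n-r$, and on the symbol side $\ell$ has degree $r-|M_\ell|$.

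I would find the matching via Hall's theorem or via Kőnig edge-colouring, in the style of Ryser's proof. Symbols with $m_\ell=0$ have the largest deficiency $r-|M_\ell|=n-r-d_\ell$, so a matching saturating the rows and covering these "critical" symbols exists by the standard Mendelsohn–Dulmage type argument. Counting shows there are at most $r$ critical symbols.

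The main obstacle is choosing $\ell_0$ and excluding it from the row while still covering all critical symbols. When the diagonal of $M$ has odd-parity effects, in particular when $r=n-1$ or when the critical symbols number exactly $r$, the counts are tight. I would handle this by choosing $\ell_0$ among symbols with $d_\ell>0$ and maximal $m_\ell$, and, if Hall fails, performing alternating-path swaps along a cycle. Alternatively I would fall back on Andersen's amalgamation-detachment approach: amalgamate the $n-r$ new vertices into one vertex with $n-r$ loops and colour its edges, then detach it using Hilton's theorem.
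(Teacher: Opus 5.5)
Your necessity argument is correct. Your sufficiency route is a legitimate alternative to the paper's: you add one row/column at a time, and the new row is a matching in the bigraph $B$ that joins row $i$ to the symbols absent from row $i$. The paper instead obtains the statement as the case $k=n$, $\rho_\ell=n$, $\sum_\ell d_\ell=n-r$ of Theorem~\ref{mainthm}. There all $n-r$ new vertices are amalgamated into one vertex $\alpha$, and:
\begin{itemize}
\item the $x_i\alpha$ edges are forced to carry the symbols missing from row $i$, so $\Theta=\Gamma$, and \eqref{ncond4}--\eqref{ncond5} are vacuous since every symbol is forced;
\item the $1$-loops at $\alpha$ carry $\vb*{d}$;
\item the $2$-loops carry $\frac12(n-2r+|M_\ell|-d_\ell)=m_\ell/2$ copies of $\ell$, a nonnegative integer exactly by the two hypotheses.
\end{itemize}
One application of Lemma~\ref{amalgambahcpc} then splits $\alpha$.

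However, your sufficiency proof is not complete. You say the required matching exists by Mendelsohn--Dulmage, then say Hall may fail, and you offer only unspecified alternating-path swaps or a retreat to Andersen's method. Part of your worry comes from a miscalculation. For $\ell=\ell_0$ the new condition is $|M_{\ell_0}|+1\ge 2(r+1)-n+(d_{\ell_0}-1)$, i.e.\ $m_{\ell_0}\ge 0$, which is automatic. So the critical set is $S=\{\ell\ne\ell_0 : m_\ell=0\}$, and $\ell_0$ never needs covering. There is no need to choose $\ell_0$ with maximal $m_\ell$: any $\ell_0$ with $d_{\ell_0}\ge 1$ works.

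The missing step is two double counts. Every row has degree $n-r$ in $B$, and $\deg_B(\ell)=r-|M_\ell|=n-r-d_\ell-m_\ell\le n-r-d_\ell$.
\begin{itemize}
\item Rows: for $R\subseteq[r]$, at least $|R|(n-r)-\deg_B(\ell_0)\ge|R|(n-r)-(n-r-1)$ edges go from $R$ to symbols other than $\ell_0$. Each such symbol absorbs at most $n-r$ of them, so $|N_{B-\ell_0}(R)|>|R|-1$.
\item Critical symbols: for $T\subseteq S$, there are $\sum_{\ell\in T}(n-r-d_\ell)\ge|T|(n-r)-(n-r-d_{\ell_0})$ edges at $T$, using $\sum_{\ell\ne\ell_0}d_\ell=n-r-d_{\ell_0}$. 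They land on rows of degree $n-r$, so $|N_B(T)|>|T|-1$.
\end{itemize}
A matching saturating $S$ uses no edge at $\ell_0$. So Mendelsohn--Dulmage applied in $B-\ell_0$ gives a matching saturating $[r]\cup S$, which is exactly the new row. With this inserted, your induction closes; parity and $\sum_\ell d'_\ell=n-(r+1)$ are preserved as you note.

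Your route is then fully elementary: Hall plus Mendelsohn--Dulmage, with no detachment lemma. The paper's one-shot amalgamation--detachment is what carries over to free symbols, equitable multiplicities, and partially prescribed diagonals, where a row-by-row extension is much harder to control.
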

Andersen's result is slightly more general, not requiring the prescribed diagonal tail to contain exactly $n-r$ symbols, and being stated for externally symmetric squares.
\begin{theorem}\cite[Theorem 11]{MR772580} \label{thm:andersen}
    Let $\vb*{d}=(d_1,\dots,d_n)$ such that $\sum_{\ell\in[n]}d_\ell\leq n-r$. Then an $r\times r$ externally symmetric Latin rectangle $M$ on $[n]$ can be extended to an $n\times n$ Latin square on $[n]$ which is symmetric off $M$ and has the prescribed diagonal tail $\vb*{d}$ if and only if the following hold.
        \begin{align*}
            &|M_\ell| \geq 2r-n+d_\ell \quad &&\text{ for } \ell \in [n],\\
            &|M_\ell|+d_\ell \equiv n \Mod 2 \quad &&\text{ for at least } r + \sum_{\ell\in[n]}d_\ell \text{ symbols } \ell\in[n].
        \end{align*}
\end{theorem}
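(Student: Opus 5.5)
We prove Theorem~\ref{thm:andersen} via the standard graph-theoretic translation, treating necessity by counting and sufficiency by induction on $n-r$, adding one row and its corresponding column at a time.

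\textbf{Necessity.} Let $N$ be the completion, with $N$ symmetric off $M$. Fix a symbol $\ell$; it occurs once in each row and column of $N$. Its occurrences in the last $n-r$ rows lie in $B$ and $C$. Since $N$ is symmetric off $M$, the entries in $B$ mirror those in $A$. Each row $i\le r$ whose row of $M$ lacks $\ell$ forces a copy of $\ell$ in $A$, giving $r-|M_\ell|$ copies in $A$ and the same number in $B$. The copies in $C$ then number $(n-r)-(r-|M_\ell|)$. By symmetry of $C$, the off-diagonal ones come in pairs, so $n-2r+|M_\ell|\equiv e_\ell \pmod 2$, where $e_\ell\ge d_\ell$ is the number of diagonal occurrences of $\ell$ in $C$. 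Hence $|M_\ell|\ge 2r-n+e_\ell\ge 2r-n+d_\ell$. Moreover $\sum_\ell e_\ell=n-r$. The parity $|M_\ell|+d_\ell\equiv n$ can fail only when $e_\ell>d_\ell$, and each such failure uses at least one of the $n-r-\sum d_\ell$ undetermined diagonal cells. So at most $n-r-\sum d_\ell$ symbols fail, i.e.\ at least $n-(n-r-\sum d_\ell)=r+\sum d_\ell$ symbols satisfy the parity.

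\textbf{Sufficiency.} First choose the full diagonal multiplicities $e_\ell\ge d_\ell$ with $\sum e_\ell=n-r$, $e_\ell\equiv n+|M_\ell|\pmod 2$, and $e_\ell\le n-2r+|M_\ell|$. The counting hypothesis is exactly what leaves enough free diagonal cells to raise the $d_\ell$ of each parity-violating symbol by one. The remaining free cells are then spent in pairs, keeping the upper bounds, which is possible since $\sum_\ell (n-2r+|M_\ell|)=n(n-2r)+r^2\ge n-r$ after parity adjustment.

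Next extend $M$ by one row and the matching column. In the new row $r+1$ we must place, in columns $1,\dots,r$, symbols missing from those columns. Position $r+1$ must carry a diagonal symbol with $e_\ell>0$. The new column is the transpose of the new row, so external symmetry is preserved automatically. This is a bipartite matching between columns $1,\dots,r$ and symbols, where column $j$ is adjacent to the $n-r$ symbols missing from column $j$. The matching must saturate the ``critical'' symbols, namely those with $|M_\ell|=2r-n+e_\ell$ or near-critical, so that the inequality $|M'_\ell|\ge 2(r+1)-n+e'_\ell$ persists. We then update $e'$ by decrementing the diagonal symbol.

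\textbf{Main obstacle.} The hardest step is this matching. The cleanest approach is to set it up as edge-colouring a bipartite multigraph whose degrees are controlled by the counts $|M_\ell|$, and to apply K\"onig's theorem or Hall's condition. The Hall deficiency must be checked against the inequality $|M_\ell|\ge 2r-n+d_\ell$, and the parities must remain consistent after the step. Alternatively, one may invoke the Andersen--Hilton--Rodger embedding with an amalgamation/detachment argument on the graph $K_n$ with loops, where symbol $\ell$ defines a $1$-factor-like colour class. Once $r=n$ is reached, the square is complete, and the diagonal tail is realized because the $e_\ell$ were distributed as required.
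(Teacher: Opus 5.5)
Your necessity argument is correct. Your choice of the full diagonal multiplicities $e_\ell$ is also correct: they play exactly the role of $d_\ell+\overline{d_\ell}$ in Claim 2 of the paper. You should add why the leftover $n-r-\sum_\ell d_\ell-|O|$ is even; this follows from $\sum_\ell(n-|M_\ell|-d_\ell)=n^2-r^2-\sum_\ell d_\ell$.

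\textbf{The gap.} The inductive step, which is the entire content of sufficiency, is only described (``must be checked'', ``the cleanest approach is \dots''), never proved. Concretely, let $H$ be the bipartite graph joining each column $j\in[r]$ to the $n-r$ symbols missing from it, and put $s_\ell:=n-2r+|M_\ell|-e_\ell$. This quantity is even and non-negative, so your ``near-critical'' symbols are irrelevant. The step requires three things at once:
\begin{itemize}
\item a matching saturating all $r$ columns;
\item a symbol $\ell^*$ with $e_{\ell^*}\ge 1$ that the matching does not use;
\item every $\ell$ with $s_\ell=0$ is either matched or equal to $\ell^*$.
\end{itemize}
A one-sided Hall/K\"onig argument gives a matching saturating the columns. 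It does not give one that also covers the critical symbols while the single diagonal cell absorbs at most one of them. That two-sided requirement is the missing idea. Falling back on ``Andersen--Hilton--Rodger with amalgamation/detachment'' only points at a proof. In the paper, the statement is Corollary~\ref{cor:andersen} of Theorem~\ref{mainthm}. There all symbols are forced, so $\Theta=\Gamma$; the loops at $\alpha$ are coloured with the parity-adjusted counts; and Lemma~\ref{amalgambahcpc} detaches $\alpha$ into $n-r$ vertices in one stroke. That lemma does all the work your row-by-row matchings would have to do.

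\textbf{How to close it.} Your route can in fact be completed elementarily.
\begin{enumerate}
\item Adjoin to the column side a vertex $\delta$ adjacent to every $\ell$ with $e_\ell\ge 1$. This set is nonempty because $\sum_\ell e_\ell=n-r\ge 1$.
\item Every column has degree $n-r$. Every symbol has $H$-degree $r-|M_\ell|\le n-r-e_\ell$, with equality exactly when $s_\ell=0$.
\item Edge counting gives Hall's condition for $[r]\cup\{\delta\}$. If $N_H(S)$ contains every $\ell$ with $e_\ell\ge1$, one of its symbols has degree below $n-r$, which forces $|N_H(S)|\ge |S|+1$.
\item For a set $T$ of critical symbols, $(n-r)|T|-\sum_{\ell\in T}e_\ell\le (n-r)|N_H(T)|$. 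Hence $|N_H(T)|\ge |T|$ unless $T$ contains some $\ell$ with $e_\ell\ge 1$. In that case $|N_H(T)|\ge|T|-1$, and $\delta$ is an additional neighbour.
\item By the Mendelsohn--Dulmage theorem, there is a single matching covering $[r]\cup\{\delta\}$ together with all critical symbols. Let $\ell^*$ be the partner of $\delta$ and set $e'_{\ell^*}=e_{\ell^*}-1$.
\item Then $|M'_\ell|\ge 2(r+1)-n+e'_\ell$ holds for every $\ell$. Parity is automatic, since $|M'_\ell|+e'_\ell$ differs from $|M_\ell|+e_\ell$ by $0$ or $2$.
\item The new entries $(j,r+1)$ are legal, because in an externally symmetric Latin rectangle row $j$ and column $j$ carry the same symbols.
\end{enumerate}
With this inserted, your argument is a genuinely more elementary alternative to the paper's detachment proof. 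As submitted, its key step is missing.
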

Non-symmetric generalizations include a generalization of Hall's theorem for $\brho$-Latin squares by Goldwasser et al. \cite{MR3280683}, a generalization of Ryser's theorem for $\brho$-Latin squares \cite{MR4414830}, and a generalization of both of these results for equitable $\brho$-Latin squares \cite{RyserForEquitRhoLatin}.
Bahmanian and Hilton generalized Andersen-Hoffman's theorem for symmetric $\brho$-Latin squares \cite{MR5024856}. 
We generalize Theorem \ref{thm:andersen} and Theorem \ref{thm:hoffman} by establishing necessary and sufficient conditions that ensure an $r\times r$ externally symmetric equitable $\brho$-Latin rectangle $M$ can be extended to an $n\times n$ equitable $\brho$-Latin square which is symmetric off $M$ and has the prescribed diagonal tail $\vb*{d}$ (see Theorem \ref{mainthm}). Our main result has a number of consequences.  In light of the famous Baranyai’s theorem, which constructs almost regular colorings of complete uniform hypergraphs \cite[Theorem 1]{MR0416986}, our result when restricted to symmetric arrays may be viewed as extending partial colorings to almost regular colorings of $\mathbb{K}_n$, the complete graph on $n$ vertices in which each vertex is additionally incident to a 1-loop (a 1-loop contributes one to the degree of the vertex to which it is incident). 
If each symbol occurs at most $n$ times in rows and columns of the extended array, our result implies Bahmanian and Hilton's result for $\brho$-Latin rectangles \cite[Theorem 1.3]{MR5024856}. If the number of occurrences of each symbol in the array is divisible by $n$, our result additionally complements results for extending factorizations of complete graphs (see for example \cite{MR1315436,MR1983358,MR2325799,BAHMANIAN2025374}).

In order to state our main result, we introduce some more notation. Let $M$ be an $r\times r$ equitable $\brho$-Latin rectangle on $[k]$. 
We will always assume that $[r]$ is the set of rows in $M$.
Let $i\in[r]$ and $\ell\in[k]$ be a row in $M$ and a symbol, respectively, and let $I\subseteq [r]$ and $K\subseteq[k]$ be a subset of the rows in $M$ and a subset of symbols in $[k]$, respectively. Let $\overline{I}:=[r]\backslash I$ and $\overline{K}:=[k]\backslash K$ for $I\subseteq[r], K\subseteq[k]$. 
Recall that by definition, each symbol $\ell\in[k]$ may either occur $\roundup{\rho_\ell/n}$ or $\rounddown{\rho_\ell/n}$ times in any given row of an equitable $\brho$-Latin square $N$. If $n\mid \rho_\ell$, then there must be exactly $\rho_\ell/n$ occurrences of $\ell$ in every row of $N$, but if $n\nmid \rho_\ell$, each row of $N$ may either contain $\roundup{\rho_\ell/n}$ or $\rounddown{\rho_\ell/n}$ occurrences of $\ell$. For this reason, we refer to a symbol $\ell\in[k]$ as {\it forced} if $n\mid \rho_\ell$ and as {\it free} if $n\nmid \rho_\ell$. We define $\eta_K(i)$ to be the number of free symbols in $K$ which occur fewer than $\roundup{\rho_\ell/n}$ times in row $i$. Note that if the symbols in $K$ which occur fewer than $\roundup{\rho_\ell/n}$ times in row $i$ are forced symbols, we will have $\eta_K(i)=0$. For a free symbol $\ell\in[k]$, we define $\eta_I(\ell)$ to be the number of rows in $I$ where the symbol $\ell$ appears fewer than $\roundup{\rho_\ell/n}$ times. For a forced symbol $\ell\in[k]$, we define $\eta_I(\ell)$ to be $0$.

Suppose that an $r\times r$ externally symmetric equitable $\brho$-Latin rectangle $M$ is extended to an $n\times n$ equitable $\brho$-Latin square $N$ which is symmetric off $M$ and has the prescribed diagonal tail $\vb*{d}$. We may assume that the symbols contained in $M$ are found in the top left $r\times r$ subarray of $N$ and that there are at least $d_\ell$ occurrences of $\ell$ in the diagonal entries in rows $i\in\{r+1,\dots,n\}$ of $N$ for $\ell\in[k]$. Let us fix a symbol $\ell\in[k]$. 
Clearly, we have that
\begin{align}\label{ncond0}
    |M_\ell| + d_\ell \leq \rho_\ell \quad \text{ for }\ell\in[k].
\end{align}
As $|N_\ell^i| \geq \rounddown{\rho_\ell/n}$ for $i\in[n]$, there are at least $(n-r)\rounddown{\rho_\ell/n}$ occurrences of $\ell$ in the $n-r$ rows of $N$ outside of $M$. Moreover, as $|N_\ell^i|\leq \roundup{\rho_\ell/n}$ (and so $|^iN_\ell|\leq \roundup{\rho_\ell/n}$), there are at most $(n-r)\roundup{\rho_\ell/n}$ occurrences of $\ell$ in the $n-r$ rows of $N$ outside of $M$ and in the $n-r$ columns of $N$ outside of $M$. As the $d_\ell$ occurrences of $\ell$ on the diagonal tail of $N$ outside of $M$ are included in both the remaining $n-r$ rows and in the remaining $n-r$ columns, we must have the following.
\begin{align}\label{ncond1}
    (n-r)\rounddown{\dfrac{\rho_\ell}{n}} \leq \rho_\ell - |M_\ell| \leq 2(n-r)\roundup{\dfrac{\rho_\ell}{n}} - d_\ell \quad \text{ for }\ell\in[k].
\end{align}

Since for $\ell\in[k]$, the number of occurrences of $\ell$ off the diagonal outside of $M$ must be even, we also have that
\begin{align} \label{ncond2}
    \Big| \{\ell\in[k] \ : \ \rho_\ell - |M_\ell| - d_\ell\not\equiv  0 \Mod 2\}\Big| \leq n-r-\sum_{\ell\in[k]}d_\ell.
\end{align}
If for some $\ell\in[k]$ we have that $\rho_\ell - |M_\ell| - d_\ell$ is odd, then 
\begin{align*}
    \Big|\{ N(i,i) \ : \ N(i,i)=\ell\}\Big| \geq \Big|\{ M(i,i) \ : \ M(i,i)=\ell\}\Big| + d_\ell + 1,
\end{align*}
so we also have that
\begin{align*} 
    n-r-\sum_{\ell\in[k]}d_\ell \equiv \sum_{\ell\in[k]}(\rho_\ell - |M_\ell| - d_\ell) \equiv \Big| \{\ell\in[k] \ : \ \rho_\ell - |M_\ell| - d_\ell \text{ is odd}\}\Big| \Mod 2.
\end{align*}
Hence, the following is also necessary.
\begin{align} \label{ncond3}
    n-r-\sum_{\ell\in[k]}d_\ell \equiv \Big| \{\ell\in[k] \ : \ \rho_\ell - |M_\ell| - d_\ell \text{ is odd}\}\Big| \Mod 2.
\end{align}

\noindent Given a prescribed diagonal tail $\vb*{d}$, we say that an $r\times r$ externally symmetric equitable $\brho$-Latin rectangle $M$ is {\it equitably $(\brho, \vb*{d})$-admissible} if it satisfies \eqref{ncond0}---\eqref{ncond3}. Here is our first main result.

\begin{theorem} \label{mainthm}
    Let $r\in\mathbb{N}\cup\{0\}$ and $k,n\in\mathbb{N}$ such that $r\leq n$ and $k\leq n^2$. Let $\brho=(\rho_1,\dots,\rho_k)$ such that $\rho_\ell\in\mathbb{N}$ for $\ell\in[k]$ and $\sum_{\ell\in[k]}\rho_\ell = n^2$, and let $\vb*{d}=(d_1,\dots,d_k)$ such that $d_\ell\in\mathbb{N}\cup\{0\}$ for $\ell\in[k]$ and $\sum_{\ell\in[k]}d_\ell \leq n-r$. Then an $r\times r$ externally symmetric equitable $\brho$-Latin rectangle $M$ on $[k]$ can be extended to an $n\times n$ equitable $\brho$-Latin square on $[k]$ which is symmetric off $M$ and has the prescribed diagonal tail $\vb*{d}$ if and only if $M$ is equitably $(\brho, \vb*{d})$-admissible and the following hold, where $Y=\{\ell\in[k] \ : \ n\nmid \rho_\ell\}$.
\begin{align}
    &|I|\Big(\sum_{\ell\in[k]}\roundup{\frac{\rho_\ell}{n}}-n\Big) \leq \sum_{\ell\in Y}\min\Big\{n\roundup{\dfrac{\rho_\ell}{n}} - \rho_\ell, \eta_I(\ell)\Big\}    &&\text{ for } I\subseteq[r],\label{ncond4}\\
    &\sum_{\ell\in K}\max\Big\{r\roundup{\frac{\rho_\ell}{n}} - \rounddown{\frac{\rho_\ell + |M_\ell| - d_\ell}{2}}, r\left(\roundup{\frac{\rho_\ell}{n}} - \rounddown{\frac{\rho_\ell}{n}}\right) + n\rounddown{\frac{\rho_\ell}{n}} - \rho_\ell,0\Big\}\label{ncond5}\\
     &\quad\quad\quad \leq \sum_{i\in [r]} \min\Big\{\sum_{\ell\in[k]}\roundup{\frac{\rho_\ell}{n}} - n, \eta_K(i)\Big\}  &&\text{ for } K\subseteq Y.\nonumber
\end{align}
\end{theorem}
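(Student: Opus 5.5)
Necessity of \eqref{ncond0}--\eqref{ncond3} is established in the discussion before the statement, so the plan is to (a) show that \eqref{ncond4} and \eqref{ncond5} are also necessary, and (b) prove sufficiency through the standard two-stage embedding: first extend $M$ to an $r\times n$ array $[M\ A]$, then complete $\begin{bmatrix} M & A\\ A^T & C\end{bmatrix}$ by choosing $C$ symmetric. In graph language, the whole square corresponds to an edge-coloring of $\mathbb{K}_n$ (loops included) in which $M$ is a precoloring of the edges among the first $r$ vertices, and color $\ell$ must have $\rho_\ell$ edge-ends, counted with loops once, split almost regularly.

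For necessity of \eqref{ncond4}: in the completed square every row $i\in I$ has exactly $n$ cells. It therefore has total deficiency $\sum_\ell \roundup{\rho_\ell/n}-n$ relative to the ceilings. Only free symbols can be below their ceiling. A free symbol $\ell$ can be below its ceiling in at most $n\roundup{\rho_\ell/n}-\rho_\ell$ rows overall, and in $I$ it can be below only in rows where it is already below in $M$, since $M$ only adds occurrences. Double counting pairs (row in $I$, free symbol deficient there) then gives \eqref{ncond4}. For \eqref{ncond5}, fix $K\subseteq Y$ and count the occurrences of $\ell\in K$ that are forced into the first $r$ rows beyond what is possible if $\ell$ attains its ceiling there. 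The first term of the max comes from symmetry: the occurrences of $\ell$ in $C$ satisfy $|C_\ell|\geq \rho_\ell-|M_\ell|-2|A_\ell|$ and are at least $d_\ell$ with parity, so $|A_\ell|\le \rounddown{(\rho_\ell-|M_\ell|-d_\ell)/2}$, which forces deficient rows. The second term comes from the floor bound in the remaining $n-r$ rows. Each row $i\in[r]$ can absorb at most $\min\{\sum\roundup{\rho_\ell/n}-n,\eta_K(i)\}$ deficiencies.

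Sufficiency, stage one, is the construction of $A$. I would build a bipartite multigraph, or equivalently a network flow, with rows $i\in[r]$ on one side and symbols on the other. Row $i$ needs $n-r$ further cells. Symbol $\ell$ receives in row $i$ between $\max(0,\rounddown{\rho_\ell/n}-|M^i_\ell|)$ and $\roundup{\rho_\ell/n}-|M^i_\ell|$ cells. The total number of cells $|A_\ell|$ given to $\ell$ lies in an interval: its lower bound comes from the left inequality of \eqref{ncond1} together with the requirement that $C$ can host the rest, and its upper bound is $\rounddown{(\rho_\ell-|M_\ell|-d_\ell)/2}$, with parity adjusted using \eqref{ncond2}. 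A feasible integral flow with lower and upper bounds exists by Hoffman's circulation theorem, or by the Gale--Ryser-type argument used in \cite{RyserForEquitRhoLatin}, iff the cut conditions hold. Cuts involving row sets reduce to \eqref{ncond4}, and cuts involving sets of free symbols reduce to \eqref{ncond5}. The forced symbols fit exactly, which is why only $Y$ appears. The columns of $A$ are then assigned by a bipartite edge-coloring with equitable (K\"onig) balancing, so that each column $j>r$ gets each symbol $\rounddown{\cdot}$ or $\roundup{\cdot}$ of its share.

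Stage two is completing $C$. After $A$ and $A^T$ are placed, each remaining vertex $j>r$ has residual demands $m_\ell(j)$ for the symbols, each $0$ or $1$ apart from a common base value. Symbol $\ell$ needs $\rho_\ell-|M_\ell|-2|A_\ell|$ further cells in $C$, of which at least $d_\ell$ lie on the diagonal. I would realize $C$ as an edge-coloring of $\mathbb{K}_{n-r}$ with loops, using amalgamation and detachment in the style of Hilton / Bahmanian--Hilton \cite{MR5024856}, or an Andersen-type argument via Theorem \ref{thm:andersen}. First choose the diagonal: put the $d_\ell$ prescribed entries, then one extra diagonal entry for each $\ell$ with odd residual, which is possible by \eqref{ncond2} and \eqref{ncond3}, and fill the remaining diagonal cells with symbols of even residual in pairs. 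After that every color class has an even off-diagonal requirement. Next amalgamate $\mathbb{K}_{n-r}$ to a single vertex carrying the appropriate loop counts, and detach it equitably; the detachment theorem gives almost-regular color degrees. The main obstacle is making the stage-one interval choices for $|A_\ell|$ compatible with the per-column equitability in stage two, especially for free symbols whose ceiling rows must be split between $A$ and $C$. I would handle this by proving stage one with the extra freedom to choose, for each free symbol, which columns $j>r$ attain the ceiling, and then check that the detachment degree bounds match.
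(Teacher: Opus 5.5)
Your necessity argument for \eqref{ncond4} and \eqref{ncond5} is correct. It double counts pairs consisting of a row and a free symbol below its ceiling there; the paper instead gets necessity from the fact that every step of its reduction is an equivalence. Your row-level flow for the contents of $A$ is essentially the paper's $\Theta\subseteq\Gamma$ and $(g,f)$-factor step (Claims 2 and 3). One correction there: your interval for $|A_\ell|$ omits the upper bound $|A_\ell|\le\rho_\ell-|M_\ell|-(n-r)\rounddown{\rho_\ell/n}$, and that bound is what produces the second term of the maximum in \eqref{ncond5}.

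The genuine gap is the passage from this row-level data to an actual square. You fix the columns of $A$ first, by K\"onig balancing, and only afterwards build $C$ by amalgamating $\mathbb{K}_{n-r}$ and detaching. For $j>r$, let $a_\ell(j)$ and $c_\ell(j)$ count $\ell$ in column $j$ of $A$ and in row $j$ of $C$. Once $A$ is placed, you need $a_\ell(j)+c_\ell(j)\in\{\rounddown{\rho_\ell/n},\roundup{\rho_\ell/n}\}$ for every $j>r$, so the target for $c_\ell(j)$ depends on $j$. Lemma \ref{amalgambahcpc} applied to $\mathbb{K}_{n-r}$ alone only gives $c_\ell(j)\approx\mu_\ell$, where $\mu_\ell$ is the average colour-$\ell$ degree in $C$. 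It gives no control over which vertices receive $\roundup{\mu_\ell}$. If $a_\ell(j)\in\{q,q+1\}$ and $c_\ell(j)\in\{s,s+1\}$ are uncorrelated, the sum can take the three values $q+s,\,q+s+1,\,q+s+2$, and some row $j>r$ fails to be equitable. You flag this as the main obstacle, but ``extra freedom in stage one and checking that the detachment bounds match'' is not an argument. The detachment lemma only equalizes degrees around the average; it cannot realize vertex-dependent colour-degree targets. Nothing in your plan guarantees that the column distribution you fixed for $A$ admits a symmetric $C$ at all.

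The missing idea is to not assign $A$ to columns before detaching. Amalgamate $x_{r+1},\dots,x_n$ of $\mathbb{K}_n\setminus\mathbb{K}_r$ into a single vertex $\alpha$. It keeps $n-r$ parallel edges to each $x_i$ with $i\le r$, $n-r$ 1-loops, and $\binom{n-r}{2}$ 2-loops. Colour the $x_i\alpha$ edges according to $\Theta$. The requirement $(\rho_\ell-|M_\ell|-\dg_\Theta(\ell))/(n-r)\approx\rho_\ell/n$ is a condition on the total colour-$\ell$ degree of $\alpha$, i.e.\ on $A^T$ and $C$ together. Next colour $d_\ell+\overline{d_\ell}$ 1-loops and the remaining number of 2-loops with $\ell$; this is your diagonal parity choice. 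Then apply Lemma \ref{amalgambahcpc} once to split $\alpha$ into $n-r$ vertices. That lemma controls $\dg_{F(\ell)}(\alpha_j)$, which counts both the edges from $\alpha_j$ to $X$ (column $j$ of $A$) and the loops and edges among the new vertices (row $j$ of $C$). Its multiplicity clauses force each $\alpha_j$ to receive exactly one edge to each $x_i$, one edge to each other $\alpha_{j'}$, and one 1-loop. So a single detachment chooses the column distribution of $A$ and the array $C$ simultaneously and yields equitability of rows $r+1,\dots,n$ directly. It replaces both your K\"onig balancing and your separate treatment of $\mathbb{K}_{n-r}$.
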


Our main theorem unifies several classical results from the aforementioned literature. In particular, when \(M\) is symmetric, \(k=n\), \(\rho_\ell=n\) for  \(\ell\in[n]\), and \(\sum_{\ell\in[n]} d_\ell=0\), Theorem~\ref{mainthm} reduces to Cruse's theorem \cite{MR329925}. When \(M\) is symmetric, \(k=n\), \(\rho_\ell=n\) for  \(\ell\in[n]\), and \(\sum_{\ell\in[n]} d_\ell=n-r\), it yields Hoffman's theorem \cite{MR694466}. More generally, when \(k=n\) and \(\rho_\ell=n\) for  \(\ell\in[n]\), Theorem~\ref{mainthm} recovers Andersen's theorem \cite{MR772580}. Finally, when \(M\) is symmetric and \(\rho_\ell\le n\) for  \(\ell\in[k]\), Theorem~\ref{mainthm} implies the theorem of Bahmanian and Hilton \cite{MR5024856}.

Baranyai's theorem constructs almost regular colorings of complete uniform hypergraphs \cite{MR0416986}. From this perspective, our result may be viewed as a generalization of Baranyai's theorem to the setting of complete graphs. More precisely, we determine necessary and sufficient conditions under which a partial coloring of \(\mathbb{K}_r\), the complete graph \(K_r\) with a loop added at each vertex, can be extended to an almost regular coloring of \(\mathbb{K}_n\).

The remainder of our paper is organized as follows. In Section \ref{AHs:prereq}, we provide further prerequisites to the proofs of our main result. We prove Theorem \ref{mainthm} in Section \ref{s:AndHoff}, provide additional corollaries of our main results in Section \ref{s:cor}, and conclude the paper with several open problems in Section \ref{s:openp}.

\section{Prerequisites} \label{AHs:prereq}

In this paper, we say that for $a,b\in\mathbb{R}$, $a \approx b$ if $\rounddown{b}\leq a\leq \roundup{b}$. Note that this is a transitive relation, but is not symmetric. 
Furthermore, if $a \approx b$, then $a/n \approx b/n$ for $n\in\mathbb{N}$. 
We denote by $\mult_G(e)$ the multiplicity of an edge $e$ in a graph $G$, where $e$ may be a set of either one or two vertices in $G$. An {\it $i$-loop} is an edge that is incident with only one vertex and contributes $i$ to the degree of that vertex. Given an edge $e$ incident with a vertex $u$, we write $e=u$ or $e=u^2$ if $e$ is a 1-loop or a 2-loop, respectively.  Let $\mult_G(vX) = \sum_{x\in X}\mult_G(vx)$ for $v\in V(G), X\subseteq V(G)$. The set of 1-loops of a graph $G$ is denoted by $E^1(G)$, and we define $E^2(G)$ to be the remaining edges in $E(G)\backslash E^1(G)$, which all contain either two distinct vertices or two copies of one vertex.

We denote by $\mathbb{K}_n$ the complete graph on $n$ vertices in which each vertex is additionally incident to a 1-loop. Note that $\mult_{\mathbb{K}_n}(u)=\mult_{\mathbb{K}_n}(uv)=1$ for $u,v \in V(\mathbb{K}_n)$ with $u\neq v$. Furthermore, there is a one-to-one correspondence between $n\times n$ symmetric Latin squares and 1-factorizations of $\mathbb{K}_n$.

Let $G$ be a graph, let $\alpha\in V(G)$, and suppose the edges of $G$ are colored with $k$ colors. We denote by $G(\ell)$ the spanning subgraph of $G$ induced by edges of color $\ell$; that is, $G(\ell)$ is the subgraph of $G$ with vertex set $V(G)$ and edge set precisely the set of all edges in $G$ with color $\ell$. We may {\it split} $\alpha$ into $p$ new vertices $\alpha_1,\dots,\alpha_p$ and obtain a new graph $F$ in which the edges incident with $\alpha$ in $G$ are shared among the new vertices $\alpha_1,\dots,\alpha_p$ in $V(F)$ so that each edge $\alpha u$ in $G$ corresponds to an edge $\alpha_i u$ in $F$ for $u\in V(F)$ and some $i\in[p]$, each $2$-loop on the vertex $\alpha$ corresponds to an edge $\alpha_i\alpha_j$ for some $i,j\in[p]$, and each $1$-loop on the vertex $\alpha$ corresponds to a $1$-loop on a vertex $\alpha_i$ for some $i\in[p]$. Then we say that $F$ is a {\it detachment} of $G$. The reverse process is also possible. If we {\it amalgamate} the $p$ vertices $\alpha_1,\dots,\alpha_p$ in $F$ into a single vertex $\alpha$, we may obtain a new graph $G$ in which each $\alpha_i u$ edge in $F$ corresponds to a copy of the edge $\alpha u$ in the resulting graph $G$ for $i\in[p]$, each edge between two vertices $\alpha_i, \alpha_j$ corresponds to a $2$-loop on $\alpha$, and each $1$-loop on a vertex $\alpha_i$ corresponds to a $1$-loop on $\alpha$. In this case, we call $G$ an {\it amalgamation} of $F$.
We use the following detachment lemma in the proof of our main result.

\begin{lemma} \label{amalgambahcpc}\cite[Theorem 4.1]{MR2942724}
  Let $G$ be a graph whose edges are colored with $k$ colors, and let $\alpha\in V(G)$. There exists a graph $F$ obtained by splitting $\alpha$ into $\alpha_1,\dots,\alpha_p$ such that
 \begin{enumerate}
     \item [\textup{(i)}] $\dg_{F(\ell)}(\alpha_i)\approx \dg_{G(\ell)}(\alpha)/p$  for  $i\in [p],\ell\in [k]$;
     \item [\textup{(ii)}] $\mult_F(\alpha_i)\approx \mult_G(\alpha)/p$ for  $i\in[p]$;
     \item [\textup{(iii)}] $\mult_F(\alpha_i u)\approx \mult_G(\alpha u)/p$ for  $i\in[p],u\in V(G)\backslash \{\alpha\}$;
     \item  [\textup{(iv)}]  $\mult_F(\alpha_i \alpha_j)\approx \mult_G(\alpha^2)/\binom{p}{2}$ for  $i,j\in[p]$ with $i\neq j$.
 \end{enumerate} 
\end{lemma}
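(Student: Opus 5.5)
We prove the lemma by induction on $p$, splitting off one vertex at a time; the case $p=1$ is trivial. For $p\ge 2$ we split $\alpha$ into two vertices, $\alpha_1$ and $\alpha'$, so that $\alpha_1$ receives roughly a $1/p$ share of everything. We then apply the induction hypothesis to $\alpha'$ with $p-1$ parts.

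\textbf{The two-vertex split.} Write $a_{u,\ell}$ for the number of $\alpha u$ edges of color $\ell$, $b_\ell$ for the number of 1-loops at $\alpha$ of color $\ell$, and $c_\ell$ for the number of 2-loops at $\alpha$ of color $\ell$. Then $\dg_{G(\ell)}(\alpha)=\sum_u a_{u,\ell}+b_\ell+2c_\ell$. In the split:
\begin{itemize}
\item an $\alpha u$ edge or a 1-loop either moves to $\alpha_1$ or stays at $\alpha'$;
\item a 2-loop either becomes an $\alpha_1\alpha'$ edge or stays a 2-loop on $\alpha'$.
\end{itemize}
No 2-loops are placed on $\alpha_1$. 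Each converted 2-loop contributes $1$ to the degree of $\alpha_1$ and $1$ to that of $\alpha'$. Hence $\dg_{F(\ell)}(\alpha')=\dg_{G(\ell)}(\alpha)-\dg_{F(\ell)}(\alpha_1)$.

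The targets are: move about $a_{u,\ell}/p$ edges of type $u$ and $b_\ell/p$ 1-loops, and convert about $2c_\ell/p$ 2-loops. Summed over $\ell$, the total $\alpha_1$-degree in color $\ell$ is then about $\dg_{G(\ell)}(\alpha)/p$.

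To achieve all these roundings at once, we form a matrix. Its rows are the \emph{types}: one row for each $u\in V(G)\setminus\{\alpha\}$, one row for 1-loops, and one row for 2-loops. Its columns are the colors. The entries are the fractional values $a_{u,\ell}/p$, $b_\ell/p$ and $2c_\ell/p$.

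By the standard consistent-rounding (integral flow) theorem, there is an integer matrix such that every entry, every row sum, every column sum and the grand total is the floor or ceiling of its fractional counterpart. We realise it as a flow network: source $\to$ type nodes, type nodes $\to$ color nodes, color nodes $\to$ sink, with lower and upper capacities equal to the floors and ceilings. The fractional flow is feasible, so an integral one exists.

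Since $p\ge 2$, each rounded entry is at most the number of available edges; for instance $\lceil 2c_\ell/p\rceil\le c_\ell$. So we can select actual edges realising the rounded matrix. This yields:
\begin{itemize}
\item $\dg_{F(\ell)}(\alpha_1)\approx \dg_{G(\ell)}(\alpha)/p$;
\item $\mult_F(\alpha_1u)\approx \mult_G(\alpha u)/p$;
\item $\mult_F(\alpha_1)\approx \mult_G(\alpha)/p$;
\item $\mult_F(\alpha_1\alpha')\approx 2\mult_G(\alpha^2)/p$.
\end{itemize}

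\textbf{Induction.} Apply the induction hypothesis to $\alpha'$ with $p-1$ parts. The main obstacle is checking that the $\approx$ relations compose correctly. The key fact is: if $x\approx D/p$, then $D-x\approx D(p-1)/p$. Then, using that $a\approx b$ implies $a/n\approx b/n$, together with transitivity (both noted in Section~\ref{AHs:prereq}), we get $(D-x)/(p-1)\approx D/p$. This settles (i)--(iii) for $\alpha_2,\dots,\alpha_p$.

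For (iv) there are two cases.
\begin{itemize}
\item \emph{Pairs among $\alpha_2,\dots,\alpha_p$.} The vertex $\alpha'$ keeps $m'=m-x$ loops, where $m=\mult_G(\alpha^2)$ and $x\approx 2m/p$. Hence $m'\approx m(p-2)/p$, and
\[ m'\big/\tbinom{p-1}{2}\approx \frac{m(p-2)/p}{\binom{p-1}{2}}=\frac{m}{\binom p2}. \]
\item \emph{Pairs $\alpha_1\alpha_j$.} The $\alpha_1\alpha'$ edges must be distributed roughly evenly, about $x/(p-1)\approx m/\binom p2$ to each $\alpha_j$. This is exactly condition (iii) of the induction hypothesis applied with $u=\alpha_1$.
\end{itemize}
So all four conditions hold for every $\alpha_i$, completing the induction.
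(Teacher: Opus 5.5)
The paper gives no proof of this lemma; it quotes it from \cite{MR2942724}, so there is no internal argument to compare against. Your proof is correct and self-contained. It follows the standard pattern for detachment lemmas: split off one vertex carrying a $1/p$ share of every relevant quantity by a single simultaneous rounding, then recurse on the remainder.

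All the points the argument needs are in place:
\begin{enumerate}
\item For $p\ge 2$, the bound $\lceil 2c_\ell/p\rceil\le c_\ell$ lets you realise the rounded matrix by actual edges.
\item Never placing a 2-loop on $\alpha_1$ turns the pair condition (iv) for $\alpha_1\alpha_j$ into condition (iii) of the inductive hypothesis with $u=\alpha_1$.
\item For integer $D$ you have $\lfloor D-t\rfloor=D-\lceil t\rceil$. Combined with transitivity of $\approx$ and its compatibility with division by a positive integer, this gives $(D-x)/(p-1)\approx D/p$ and $(m-x)/\binom{p-1}{2}\approx m/\binom{p}{2}$.
\end{enumerate}
A by-product: at the last step, splitting a vertex with $m'$ remaining 2-loops into two parts gives $x\approx 2m'/2=m'$, i.e.\ $x=m'$. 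So no $\alpha_i$ ends up with a 2-loop. This is slightly more than the lemma states, and consistent with the paper's use of it in Claim~1 to conclude $F'\cong K_n$.

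Two cosmetic slips. First, ``summed over $\ell$'' in your description of the targets should read ``summed over the types.'' Second, rounding the grand total would need an extra sink-to-source arc with bounds given by its floor and ceiling. You never use the grand total, though, so that part of the rounding statement can simply be dropped.
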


We introduce one more theorem that is used in the proof of our main result. Let $f$ and $g$ be integer functions on the vertex set of a graph $G$ with $0\leq g(x)\leq f(x)$ for $x$. A {\it $(g,f)$-factor} is defined to be a spanning subgraph $F$ of $G$ such that $g(x)\leq \dg_F(x)\leq f(x)$ for each $x$. We define $N_G(A)$ to be the  neighborhood of $A$ in $G$. The following is a special case of Lov\'{a}sz's  $(g,f)$-factor Theorem \cite{MR325464}.

\begin{theorem} \cite[Theorem 1]{MR1081839}\label{gffactorthm1}
A given bipartite graph $G[X,Y]$ has a $(g,f)$-factor if and only if the following holds.
\begin{align*}
\sum_{a\in A}f(a)&\geq  \sum_{a\notin A} \max\left\{ g(a) - \dg_{G-A}(a),0 \right\} \quad &&\text{ for } A\subseteq X\cup Y.
\end{align*} 
\end{theorem}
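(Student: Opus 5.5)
The statement above is Theorem~\ref{gffactorthm1}, the bipartite case of Lov\'asz's $(g,f)$-factor theorem. I would prove it directly, using network flows with lower bounds (Hoffman's circulation theorem), rather than specializing Lov\'asz's general theorem. In the bipartite setting there are no odd-component terms, so a flow argument is natural.

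\emph{Necessity.} Suppose $F$ is a $(g,f)$-factor and fix $A\subseteq X\cup Y$. Each $a\notin A$ has at least $g(a)$ edges of $F$ at it. At most $\dg_{G-A}(a)$ of these avoid $A$, so at least $\max\{g(a)-\dg_{G-A}(a),0\}$ of them go to $A$. These edges are distinct for distinct $a\notin A$. Every such edge is counted at its endpoint in $A$, and $\dg_F(a')\le f(a')$ for $a'\in A$. Summing gives the stated inequality.

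\emph{Sufficiency.} I would build a network as follows:
\begin{itemize}
\item a source $s$, a sink $t$, and an arc $t\to s$ with lower bound $0$ and infinite capacity;
\item arcs $s\to x$ for $x\in X$ and $y\to t$ for $y\in Y$, each with lower bound $g(\cdot)$ and capacity $f(\cdot)$;
\item an arc $x\to y$ of capacity $1$ (lower bound $0$) for each edge $xy$ of $G$.
\end{itemize}
Integral feasible circulations correspond exactly to $(g,f)$-factors, since the flow through $x$ (respectively $y$) equals its degree in the chosen edge set. By Hoffman's theorem (integrality included), a feasible circulation exists if and only if, for every node set $S$, the sum of the lower bounds on arcs entering $S$ is at most the sum of the capacities on arcs leaving $S$.

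Any cut with $t\in S$ and $s\notin S$ is satisfied automatically, because the arc $t\to s$ leaves $S$ with infinite capacity. So three types of $S$ remain: $s\in S$, $t\notin S$; both in $S$; neither in $S$. For each type I write $X_1=X\cap S$ and $Y_1=Y\cap S$ and expand the cut condition. For example, when $s\in S$ and $t\notin S$, the condition reads
\[
\sum_{y\in Y_1} g(y)\;\le\;\sum_{x\in X\setminus X_1} f(x)\;+\;e(X_1,Y\setminus Y_1)\;+\;\sum_{y\in Y_1} f(y).
\]
Here the left side comes from the arcs $y\to t$ leaving $S$ (lower bounds) together with the arcs $s\to x$ entering, and the right side comes from the capacities of the arcs $s\to x$ with $x\notin S$, the edge arcs from $X_1$ to $Y\setminus Y_1$, and the arcs $y\to t$ leaving.

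The key step is to choose $S$ worst-case. For fixed $X_1$, a vertex $y$ belongs in $Y_1$ exactly when it makes a positive contribution. Optimizing over each vertex separately in this way produces the $\max\{\cdot,0\}$ terms and the degrees $\dg_{G-A}(a)$. The resulting inequality is precisely the hypothesis for the set $A=(X\setminus X_1)\cup(Y\setminus Y_1)$, or for a subset of $A$, possibly after discarding vertices whose terms are nonpositive.

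The other two types of cut are handled similarly: each reduces to the hypothesis applied to a set $A$ lying entirely in $X$ or entirely in $Y$. The symmetry of the stated condition between $X$ and $Y$ covers both.

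\emph{Main obstacle.} The hard part is the bookkeeping that matches the three cut types exactly to instances of the stated inequality. In particular, one must check that the edges between $A$ and its complement are counted correctly, through $\dg_{G-A}$ on the non-$A$ side and implicitly through $f$ on the $A$ side. I would first do the case $s\in S$, $t\notin S$ in full, and then obtain the remaining cases by reversing the roles of $X$ and $Y$.
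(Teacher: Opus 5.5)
The paper does not prove this statement; it cites it as the bipartite case of Lov\'asz's $(g,f)$-factor theorem. A self-contained circulation proof is therefore a reasonable alternative, and your necessity argument is correct. The sufficiency part, however, breaks at exactly the bookkeeping step you single out.

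Hoffman's condition compares lower bounds on arcs \emph{entering} $S$ with capacities on arcs \emph{leaving} $S$. If $s\in S$ and $t\notin S$, the arcs entering $S$ are $t\to s$ and the edge arcs $x\to y$ with $x\notin S$, $y\in S$. All of these have lower bound $0$, so this cut condition is vacuous. Your displayed inequality instead uses the lower bounds of the \emph{leaving} arcs $y\to t$. It also refers to arcs $s\to x$ entering $S$, which cannot exist when $s\in S$. Since $g\le f$, the inequality is trivially true anyway, so it cannot be ``precisely the hypothesis'' for the mixed set $(X\setminus X_1)\cup(Y\setminus Y_1)$. No cut produces a mixed $A$. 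Consequently, doing this case in full and then ``reversing the roles of $X$ and $Y$'' would yield none of the needed content.

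All the content lies in the two cases you only assert. Write $X_2=X\setminus X_1$, $Y_2=Y\setminus Y_1$, and let $e(P,Q)$ be the number of edges between $P\subseteq X$ and $Q\subseteq Y$. The cut conditions read
\begin{align*}
&s,t\in S: && \sum_{y\in Y_2}g(y)\le\sum_{x\in X_2}f(x)+e(X_1,Y_2),\\
&s,t\notin S: && \sum_{x\in X_1}g(x)\le e(X_1,Y_2)+\sum_{y\in Y_1}f(y).
\end{align*}
For the first, put $A=X_2$. Then $\dg_{G-A}(y)=e(X_1,\{y\})$ for $y\in Y_2$, and the inequality follows from the hypothesis for this $A$ after discarding the nonnegative terms indexed by $X_1\cup Y_1$. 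The second works the same way with $A=Y_1$, using $\dg_{G-A}(x)=e(\{x\},Y_2)$ for $x\in X_1$.

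With this correction the proof is complete. It also shows the hypothesis is needed only for $A\subseteq X$ and $A\subseteq Y$. Indeed, minimizing the right-hand sides above over the free side ($X_2$, respectively $Y_1$) gives exactly the two families of inequalities in Theorem~\ref{gffactorthm2}.
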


Theorem \ref{gffactorthm1} was later simplified as follows.

\begin{theorem} \cite[Theorem 5]{MR3564794}\label{gffactorthm2}
A given bipartite graph $G[X,Y]$ has a $(g,f)$-factor if and only if the following holds.
\begin{align*}
    \sum_{a\in A} g(a)&\leq \sum_{a\in N_G(A)} \min \left\{ f(a), \mult_G(aA)\right\}
\quad &&\text{ for } A\subseteq X, A\subseteq Y.
\end{align*} 
\end{theorem}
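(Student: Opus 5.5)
We will derive this criterion directly from Theorem \ref{gffactorthm1}, treating the two implications separately. Throughout, degrees and $\mult_G(aA)$ count edges with multiplicity. Since $G$ is bipartite, for $a\in X$ every edge at $a$ goes to $Y$, and $N_G(A)\subseteq Y$ whenever $A\subseteq X$ (and symmetrically for $A\subseteq Y$).

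\emph{Necessity.} Suppose $F$ is a $(g,f)$-factor and let $A\subseteq X$; the case $A\subseteq Y$ is symmetric. Then
\begin{align*}
\sum_{a\in A}g(a)\le\sum_{a\in A}\dg_F(a)=\sum_{b\in N_G(A)}\mult_F(bA).
\end{align*}
The equality holds because every $F$-edge at a vertex of $A$ ends in $N_G(A)$. For each $b\in N_G(A)$ we have $\mult_F(bA)\le \dg_F(b)\le f(b)$ and $\mult_F(bA)\le \mult_G(bA)$. Hence each term is at most $\min\{f(b),\mult_G(bA)\}$, which is the stated inequality.

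\emph{Sufficiency.} Assume the inequality holds for every $A\subseteq X$ and every $A\subseteq Y$. We verify the condition of Theorem \ref{gffactorthm1} for an arbitrary $S\subseteq X\cup Y$. Write $S_X=S\cap X$ and $S_Y=S\cap Y$. For $a\in X\setminus S_X$ we have $\dg_{G-S}(a)=\mult_G(a(Y\setminus S_Y))$, and similarly for $a\in Y\setminus S_Y$. Let
\begin{align*}
T_X=\{a\in X\setminus S_X : g(a)>\dg_{G-S}(a)\},\qquad T_Y=\{a\in Y\setminus S_Y : g(a)>\dg_{G-S}(a)\}.
\end{align*}
The right-hand side of Theorem \ref{gffactorthm1} is then
\begin{align*}
\sum_{a\in T_X}\bigl(g(a)-\dg_{G-S}(a)\bigr)+\sum_{a\in T_Y}\bigl(g(a)-\dg_{G-S}(a)\bigr).
\end{align*}

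Apply the hypothesis with $A=T_X$, and split $N_G(T_X)$ into its part inside $S_Y$ and its part in $Y\setminus S_Y$. On the first part bound each term by $f(b)$, and on the second part by $\mult_G(bT_X)$. This gives
\begin{align*}
\sum_{a\in T_X}g(a)\le \sum_{b\in S_Y}f(b)+\sum_{b\in Y\setminus S_Y}\mult_G(bT_X)=\sum_{b\in S_Y}f(b)+\sum_{a\in T_X}\dg_{G-S}(a).
\end{align*}
Therefore $\sum_{a\in T_X}(g(a)-\dg_{G-S}(a))\le \sum_{b\in S_Y}f(b)$. The symmetric argument with $A=T_Y$ gives $\sum_{a\in T_Y}(g(a)-\dg_{G-S}(a))\le\sum_{b\in S_X}f(b)$. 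Adding the two bounds yields $\sum_{a\in S}f(a)\ge\sum_{a\notin S}\max\{g(a)-\dg_{G-S}(a),0\}$. Since $S$ was arbitrary, Theorem \ref{gffactorthm1} produces the desired $(g,f)$-factor.

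The one point requiring care is the double-counting identity $\sum_{b\in Y\setminus S_Y}\mult_G(bT_X)=\sum_{a\in T_X}\dg_{G-S}(a)$. It holds because $T_X\subseteq X\setminus S_X$ and $G$ is bipartite, so the edges of $G-S$ at vertices of $T_X$ are exactly the edges from $T_X$ to $Y\setminus S_Y$. This is also where we use that $f(b)\ge\min\{f(b),\mult_G(bT_X)\}$ on $S_Y$ and $\mult_G(bT_X)\ge\min\{f(b),\mult_G(bT_X)\}$ off $S_Y$.
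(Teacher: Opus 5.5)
The paper gives no proof of this statement. It quotes it from \cite[Theorem 5]{MR3564794} as a simplification of Theorem~\ref{gffactorthm1}, so there is no in-paper argument to compare against. Your derivation is correct and makes that ``simplification'' self-contained relative to Theorem~\ref{gffactorthm1}.

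\emph{Necessity.} This is a direct double count and does not need Theorem~\ref{gffactorthm1} at all.

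\emph{Sufficiency.} You use the bipartite structure in exactly the right way. For $a\in X\setminus S$, the deficiency $g(a)-\dg_{G-S}(a)$ depends only on $S\cap Y$, and symmetrically for vertices of $Y\setminus S$. So the right-hand side of Theorem~\ref{gffactorthm1} splits into an $X$-part bounded by $\sum_{b\in S_Y}f(b)$ and a $Y$-part bounded by $\sum_{b\in S_X}f(b)$. Each bound is precisely the one-sided hypothesis applied to $A=T_X$ or $A=T_Y$. The double-counting identity you flag is right.

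The one step left implicit is where you bound the terms with $b\in N_G(T_X)\cap S_Y$ by $f(b)$ and then enlarge the sum to all of $S_Y$. That uses $f\ge 0$ on $S_Y\setminus N_G(T_X)$. It holds by the standing assumption $0\le g\le f$ in the definition of a $(g,f)$-factor, but it is worth saying.

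If you want the two criteria to be seen as equivalent purely as systems of inequalities, the converse also runs through Theorem~\ref{gffactorthm1}. For $A\subseteq X$, take $S=\{b\in Y:\ f(b)<\mult_G(bA)\}$. Your direct necessity argument is simpler, though.

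What citing buys the paper is brevity. What your argument buys is the observation that, in the bipartite case, the one-sided form is an elementary corollary of the Lov\'asz-type criterion already stated as Theorem~\ref{gffactorthm1}.
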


\section{Extending Externally Symmetric Equitable \texorpdfstring{$\brho$}{rho}-Latin Rectangles} \label{s:AndHoff}

In this section, we prove Theorem \ref{mainthm}.

\begin{proof}
Let $M$ be an $r\times r$ externally symmetric equitable $\brho$-Latin rectangle and fix a prescribed diagonal tail $\vb*{d}$. 
We have established the necessity of \eqref{ncond0}, \eqref{ncond1}, \eqref{ncond2}, and \eqref{ncond3} in the introduction. 
The necessity of the remaining conditions will become clear at the end of the proof. 

To prove the sufficiency, assume that $M$ is equitably $(\brho, \vb*{d})$-admissible, so \eqref{ncond0}--\eqref{ncond3} hold.
Let $F=\mathbb K_n \backslash \mathbb{K}_r$, the graph resulting from removing the edges of a complete subgraph with loops $\mathbb{K}_r$ from the complete graph with loops $\mathbb{K}_n$. Then $V(F)=X^*:=\{x_1,\dots,x_n\}$ and $E(F) = \{x_ix_j \ : \ i\geq r+1 \text{ or } j\geq r+1 \text{ or both}\}$. Note that there is one edge in $F$ for each pair of off-diagonal cells $(i,j)$ and $(j,i)$ and one 1-loop in $F$ for each diagonal cell $(i,i)$ that must be added to the array $M$ in order to extend it to an $n\times n$ array. Let $X=\{x_1,\dots, x_r\}$.
For $\ell\in[k]$, we color $d_\ell$ arbitrary uncolored 1-loops (incident with vertices of $X^* \backslash X$) with color $\ell$. These colored 1-loops represent the prescribed diagonal of the desired equitable $\brho$-Latin square. Then
\begin{align*}
    \sum_{i\in[n]\backslash[r]}\mult_{F(\ell)}(x_i)=d_\ell \text{ for }\ell\in[k].
\end{align*}
Let $G$ be the graph obtained by amalgamating $x_{r+1},\dots,x_n$ of $F$ into a single vertex $\alpha$. Then for $\ell\in[k],i\in[r]$, we have that
\begin{align*}
    \mult_G(\alpha)=n-r, &&\mult_G(x_i\alpha)=n-r, &&\mult_G(\alpha^2) = \binom{n-r}{2},&&\mult_{G(\ell)}(\alpha)=d_\ell.
\end{align*}
First, we note that we can extend $M$ to an $n\times n$ equitable $\brho$-Latin square $N$ with the prescribed diagonal tail $\vb*{d}$ which is symmetric off $M$ if and only if we can color all edges in $F$ such that the following is satisfied.
\begin{align} \label{AHFconditions}
\begin{cases}
    |M_\ell^i| + \dg_{F(\ell)}(x_i) \approx \dfrac{\rho_\ell}{n} &\text{ for } i\in[r], \ell\in[k],\\
    \dg_{F(\ell)}(x_i) \approx \dfrac{\rho_\ell}{n} &\text{ for } i\in[n]\backslash[r], \ell\in[k],\\
    |M_\ell| + |E^1(F(\ell))| + 2|E^2(F(\ell))| = \rho_\ell &\text{ for } \ell\in[k].
\end{cases}
\end{align}
To see this, first observe that if $N$ is an $n\times n$ equitable $\brho$-Latin square with prescribed diagonal tail $\vb*{d}$ which contains a copy of $M$ in its top left $r\times r$ subarray and is symmetric off this subarray, then we obtain a coloring of $F$ meeting the above conditions in the following way. We color the edge $x_ix_j$ of $F$ with color $\ell$ if cell $(i,j)$ and cell $(j,i)$ contain the symbol $\ell$ in $N$ for $i,j\in[n]\backslash[r]$, and we color the $1$-loop on vertex $x_i$ of $F$ with color $\ell$ if cell $(i,i)$ contains the symbol $\ell$ in $N$ for $i\in[n]\backslash [r]$. This is possible because $N$ is symmetric off $M$. As $N$ has has the prescribed diagonal tail $\vb*{d}$, we will have that $\sum_{[n]\backslash[r]}\mult_{F(\ell)}(x_i) = d_\ell$ for $\ell\in[k]$ in the resulting coloring of $F$. It is straightforward to verify that the three conditions in \eqref{AHFconditions} are also met by coloring the edges of $F$ in this way. 

\noindent Conversely, if the edges of $F$ are colored so that \eqref{AHFconditions} holds, we may fill the cells outside the top left subarray of $N$ by placing the symbol $\ell$ in cells $(i,j)$ and $(j,i)$ of $N$ whenever $x_ix_j$ is colored with color $\ell$ in the coloring of $F$, and we may fill the remaining cells in the top left $r\times r$ subarray of $N$ so that this subarray is a copy of $M$. The resulting array $N$ will be symmetric off $M$, and by \eqref{AHFconditions}, we will also have that $|N_\ell| = \rho_\ell$ for $\ell\in[k]$. Moreover, as $M$ is externally symmetric, we will have that $|N_\ell^i|\approx \rho_\ell/n$ and $|^iN_\ell|\approx \rho_\ell/n$ for $i\in[n], \ell\in[k]$ by \eqref{AHFconditions}. Thus, $N$ will be an equitable $\brho$-Latin square. 
As there are exactly $d_\ell$ $1$-loops of color $\ell$ in the graph $F$ both before and after its coloring is completed to satisfy \eqref{AHFconditions}, the resulting array $N$ will also have the prescribed diagonal tail $\vb*{d}$.\\

\noindent {\bf Claim 1. }
We can color the edges of $F$ such that \eqref{AHFconditions} is satisfied if and only if we can color the edges of $G$, an amalgamation of $F$, such that the following hold. 
\begin{align}\label{degeqG}
    \begin{cases}
    |M_\ell^i| + \dg_{G(\ell)}(x_i) \approx \dfrac{\rho_\ell}{n} &\text{ for } i\in [r], \ell\in[k],\\
    \dfrac{\dg_{G(\ell)}(\alpha)}{n-r} \approx \dfrac{\rho_\ell}{n} &\text{ for } \ell\in[k],\\
    |M_\ell| + |E^1(G(\ell))|+2|E^2(G(\ell))|= \rho_\ell &\text{ for } \ell\in[k].
    \end{cases}
\end{align}
To prove this claim, first suppose that the coloring of $F$ is extended such that \eqref{AHFconditions} holds. Then a coloring of $G$ satisfying \eqref{degeqG} can be found by amalgamating the $n-r$ vertices $x_{r+1},\dots,x_n$ in $V(F)$ into a single vertex $\alpha$.

\noindent Conversely, suppose that the coloring of $G$ can be extended such that \eqref{degeqG} holds.
Then by Lemma \ref{amalgambahcpc}, we may detach $\alpha$ into $n-r$ vertices $\alpha_1,\dots,\alpha_{n-r}$ and obtain a graph $F'$ for which
\begin{align*}
    &\mult_{F'}(\alpha_i)= \dfrac{\mult_G(\alpha)}{n-r}=1 &&\text{ for }i\in[n-r],\\
    &\mult_{F'}(\alpha_i x_j)= \dfrac{\mult_G(\alpha x_j)}{n-r} = 1 &&\text{ for }i\in[n-r],j\in [r],\\
    &\mult_{F'}(\alpha_i \alpha_j)= \dfrac{\mult_G(\alpha^2)}{\binom{n-r}{2}} = 1&&\text{ for }i,j\in[n-r] \text{ with }i\neq j,\\
    &\dg_{F'(\ell)}(\alpha_i)\approx \dfrac{\dg_{G(\ell)}(\alpha)}{n-r} \approx \dfrac{\rho_\ell}{n} &&\text{ for }i\in[n-r],\ell\in[k]. 
\end{align*}
Hence, $F'\cong K_n$ and $F'$ satisfies \eqref{AHFconditions}, so this completes the proof of Claim 1.\\ 

Let $\Gamma[X,[k]]$ be the bipartite graph whose edge multi-set contains $\roundup{\rho_\ell/n}-|M_\ell^i|$ copies of the edge $x_i\ell$ for $i\in[r], \ell\in[k]$. The edges in $\Gamma$ correspond to the maximum number of occurrences of $\ell\in[k]$ we could add to each row of the given equitable $\brho$-Latin rectangle $M$ without exceeding $\roundup{\rho_\ell/n}$ occurrences of $\ell$ in a row of an equitable $\brho$-Latin square. Note that $\Gamma$ satisfies the following.
\begin{align}\label{degeqGamma}
 \begin{cases}
\dg_\Gamma(x_i)= \displaystyle \sum_{\ell\in[k]} \roundup{\dfrac{\rho_\ell}{n}} - r & \text{ for } i \in [r],\vspace{1mm}\\
\dg_{\Gamma}(\ell)=r\roundup{\dfrac{\rho_\ell}{n}} -|M_\ell|& \text{ for  } \ell \in [k],\vspace{2mm}\\
\mult_\Gamma(x_i\ell) =\roundup{\dfrac{\rho_\ell}{n}} - |M_\ell^i| & \text{ for } i\in[r],\ell\in[k].
\end{cases}
\end{align}
To see that such a graph is well-defined, note that by definition of the equitable $\brho$-Latin rectangle $M$, we have that $\roundup{\rho_\ell/n}\geq |M_\ell^i|$ and $r\roundup{\rho_\ell/n}\geq |M_\ell|$, and note that by definition of the $k$-tuple $\brho$, we have that $\sum_{\ell\in[k]}\roundup{\rho_\ell/n} \geq \sum_{\ell\in[k]}\rho_\ell/n = n \geq r$.\\

\noindent {\bf Claim 2. }
We can color the edges of $G$ so that \eqref{degeqG} is satisfied if and only if there exists a subgraph $\Theta$ of $\Gamma$ with $r(n-r)$ edges so that
\begin{equation} \label{AHThetaconditions}
    \begin{cases}
        \dg_{\Theta}(x_i) = n-r & \text{ for } i\in[r],\vspace{1mm}\\
        \dfrac{\rho_\ell - |M_\ell| - \dg_\Theta(\ell)}{n-r} \approx \dfrac{\rho_\ell}{n} & \text{ for } \ell\in[k],\vspace{2mm}\\
        \dg_{\Theta}(\ell) \leq \rounddown{\dfrac{\rho_\ell - |M_\ell| - d_\ell}{2}} &\text{ for } \ell\in [k],\vspace{1mm}\\
        \mult_\Theta(x_i\ell) + |M_\ell^i| \approx \dfrac{\rho_\ell}{n} &\text{ for } i\in[r],\ell\in[k].
    \end{cases}
\end{equation}
As $\sum_{\ell\in[k]}\rho_\ell = n^2$, we have that $\dg_\Gamma(x_i)\geq n-r$. 
Note that \eqref{AHThetaconditions} leads to one lower bound and two upper bounds on $\dg_\Theta(\ell)$ for $\ell\in[k]$, stated below.
\begin{align*}
    \rho_\ell - |M_\ell| - (n-r)\roundup{\dfrac{\rho_\ell}{n}} \leq \dg_\Theta(\ell) \leq \min\Big\{ \rho_\ell - |M_\ell| - (n-r)\rounddown{\dfrac{\rho_\ell}{n}}, \rounddown{\dfrac{\rho_\ell - |M_\ell| - d_\ell}{2}}\Big\}.
\end{align*}
Since $\rho_\ell\leq n\roundup{\rho_\ell/n}$, the lower bound on $\dg_\Theta(\ell)$ in the above equation is not greater than $\dg_\Gamma(\ell)$, as shown by the following.
\begin{align*}
    \dg_\Gamma(\ell) = r\roundup{\dfrac{\rho_\ell}{n}} - |M_\ell| \geq r\roundup{\dfrac{\rho_\ell}{n}} - |M_\ell| + \rho_\ell - n\roundup{\dfrac{\rho_\ell}{n}}.
\end{align*}
Moreover, we have that $\rho_\ell - |M_\ell| - (n-r)\rounddown{\rho_\ell/n} \geq 0$ by \eqref{ncond1} and that $\rho_\ell - |M_\ell| - d_\ell \geq 0$ by \eqref{ncond0}. Hence, the upper bounds on $\dg_\Theta(\ell)$ in \eqref{AHThetaconditions} are both non-negative.
Finally, observe that for $\ell\in[k]$, we have by \eqref{ncond1} that
$(n-r)\roundup{\rho_\ell/n} \geq (\rho_\ell - |M_\ell| + d_\ell)/2$,
and it follows that
\[
\frac{\rho_\ell-|M_\ell|-d_\ell}{2}
=\rho_\ell-|M_\ell|
-\frac{\rho_\ell-|M_\ell|+d_\ell}{2}
\ge \rho_\ell-|M_\ell|
-(n-r)\roundup{\dfrac{\rho_\ell}{n}}.
\]

Since $\rho_\ell - |M_\ell| - (n-r)\roundup{\rho_\ell/n}$ is an integer, it follows that $\rho_\ell - |M_\ell| - (n-r)\roundup{\rho_\ell/n} \leq \rounddown{(\rho_\ell - |M_\ell| - d_\ell)/2}$ for $\ell\in[k]$, and so the upper bounds on $\dg_\Theta(\ell)$ are both greater than or equal to the lower bound on $\dg_\Theta(\ell)$.

\noindent To prove Claim 2, suppose that the coloring of $G$ is extended such that  \eqref{degeqG} is satisfied. Define  $\Theta [X, [k]]\subseteq \Gamma$ to be the bipartite graph with vertex set $X\cup[k]$ and an edge multi-set in which there are $\dg_{G(\ell)}(x_i)$ copies of the edge $x_i\ell$ for $i\in[r], \ell\in[k]$. 
Then for $i\in[r]$,
\begin{align*}
    \dg_\Theta(x_i)=\mult_G(x_i\alpha)=n-r.
\end{align*}
Let $\ell\in[k]$. We have that 
\begin{align*}
    \rho_\ell &=|M_\ell| + |E^1(G(\ell))| + 2|E^2(G(\ell))|\\
    &= |M_\ell| + \mult_{G(\ell)}(\alpha) + 2\sum_{i\in[r]}\mult_{G(\ell)}(\alpha x_i) + 2\mult_{G(\ell)}(\alpha^2).
\end{align*}
Hence, since $\dg_\Theta(\ell) = \sum_{i\in[r]}\mult_{G(\ell)}(x_i\alpha)$, 
\begin{align*}
    \rho_\ell - |M_\ell| - \dg_\Theta(\ell)&= \rho_\ell - |M_\ell| - \sum_{i\in[r]}\mult_{G(\ell)}(x_i\alpha)\\
    &= \mult_{G(\ell)}(\alpha) + \sum_{i\in[r]}\mult_{G(\ell)}(x_i\alpha) + 2\mult_{G(\ell)}(\alpha^2)\\
    &= \dg_{G(\ell)}(\alpha),
\end{align*}
and it follows that
\begin{align*}
    \frac{\rho_\ell - |M_\ell| - \dg_\Theta(\ell)}{n-r} &= \frac{\dg_{G(\ell)}(\alpha)}{n-r}\approx \frac{\rho_\ell}{n}.
\end{align*}
Since
\begin{align*}
    \rho_\ell&=|M_\ell| + |E^1(G(\ell))|+2|E^2(G(\ell))|\\
        &=|M_\ell|+\mult_{G(\ell)}(\alpha)+2\mult_{G(\ell)}(\alpha X)+2\mult_{G(\ell)}(\alpha^2)\\
        &\geq |M_\ell|+d_\ell+2\dg_\Theta(\ell),
\end{align*}
we have that $\dg_\Theta(\ell)\leq \rounddown{(\rho_\ell - |M_\ell| - d_\ell)/2}$. 
Finally,
\begin{align*}
    \mult_\Theta(x_i\ell) + |M_\ell^i| = \dg_{G(\ell)}(x_i) + |M_\ell^i| \approx\frac{\rho_\ell}{n},
\end{align*}
so \eqref{AHThetaconditions} is satisfied.

\noindent Conversely, suppose that $\Theta\subseteq \Gamma$ satisfying \eqref{AHThetaconditions} exists. 
For $\ell\in [k]$, if $\ell x_i \in E(\Theta)$ for some $i\in [r]$, we color $\mult_\Theta(\ell x_i)$ distinct $x_i\alpha$-edges in $G$ with $\ell$. Since $\dg_\Theta(x_i)=n-r$ for $i\in [r]$, all the edges between $\alpha$ and $X$ can be colored this way.
Since $\mult_\Theta(x_i\ell) + |M_\ell^i| \approx \rho_\ell/n$, 
we have that $d_{G(\ell)}(x_i) + |M_\ell^i| \approx \rho_\ell/n$ for $\ell \in [k]$, $i\in[r]$. 

\noindent Let $O\subseteq[k]$ be the set of colors for which $\rho_\ell - |M_\ell| - d_\ell \equiv 1 \Mod 2$.
Then by \eqref{ncond2} and \eqref{ncond3}, $(n-r-|O|-\sum_{\ell\in[k]}d_\ell)/2$ is a non-negative integer, and 
by \eqref{AHThetaconditions}, $\rounddown{(\rho_\ell - |M_\ell| - d_\ell)/2} - \dg_\Theta(\ell) \geq 0$ for $\ell\in[k]$. Hence, $(\rho_\ell - |M_\ell| - d_\ell)/2 - \dg_\Theta(\ell) \geq 0$ for $\ell\in[k]\backslash O$, and $(\rho_\ell - |M_\ell| - d_\ell-1)/2 - \dg_\Theta(\ell) \geq 0$ for $\ell\in O$.
Furthermore, 
\begin{align*}
    n-r-|O|-\sum_{\ell\in[k]}d_\ell &\leq (n-r)^2 - |O| - \sum_{\ell\in[k]}d_\ell\\
    &=n^2 - r^2 - \sum_{\ell\in[k]}d_\ell - 2r(n-r) - |O|\\
    &= \sum_{\ell\in[k]}(\rho_\ell - |M_\ell| - d_\ell) - 2\sum_{\ell\in[k]}\dg_\Theta(\ell) - |O|\\
    &= \sum_{\ell\in[k]\backslash O}(\rho_\ell - |M_\ell| - d_\ell - 2\dg_\Theta(\ell)) + \sum_{\ell\in O}(\rho_\ell - |M_\ell| - d_\ell - 1 - 2\dg_\Theta(\ell)\Big).
\end{align*}
Thus, there exists a sequence of integers $b_1,\dots,b_k$ such that
\begin{align*}
\begin{cases}
    \displaystyle \sum_{\ell\in[k]}b_\ell = \dfrac{1}{2}\Big(n-r-|O|-\sum_{\ell\in[k]}d_\ell\Big), \quad&\\
    0\leq b_\ell \leq \dfrac{1}{2}\left(\rho_\ell - |M_\ell| - d_\ell\right) - \dg_\Theta(\ell) \quad& \text{ for } \ell\in[k]\backslash O,\vspace{1mm}\\
    0\leq b_\ell \leq \dfrac{1}{2}\left(\rho_\ell - |M_\ell| - d_\ell - 1\right) - \dg_\Theta(\ell) \quad& \text{ for } \ell\in O.
\end{cases}
\end{align*}
We define $\overline{d_\ell}$ as follows.
\begin{align*}
    \overline{d_\ell} = \begin{cases}
        2b_\ell & \text{ for } \ell\in[k]\backslash O,\\
        2b_\ell + 1 & \text{ for } \ell\in O.
    \end{cases}
\end{align*}
Thus, we have that $\overline{d_1},\dots,\overline{d_k}$ satisfies the following.
\begin{align} \label{bard}
\begin{cases}
    \displaystyle \sum_{\ell\in[k]}\overline{d_\ell} = n-r-\sum_{\ell\in[k]}d_\ell, \quad &\vspace{1mm}\\
    \overline{d_\ell} \equiv \rho_\ell - |M_\ell| - d_\ell \Mod 2 \quad &\text{ for } \ell\in[k],\vspace{1mm}\\
    \dg_\Theta(\ell) \leq \dfrac{1}{2}(\rho_\ell - |M_\ell| - d_\ell - \overline{d_\ell}) \quad &\text{ for } \ell\in[k].
\end{cases}
\end{align}
We now color $1$-loops and $2$-loops of $G$ in the following way. We color $\overline{d_\ell}$ 1-loops with color $\ell$ such that $\mult_{G(\ell)}(\alpha) = d_\ell + \overline{d_\ell}$ and so that there are then $\mult_{G(\ell)}(\alpha^2) = \frac{1}{2}(\rho_\ell - |M_\ell| - d_\ell - \overline{d_\ell}) - \dg_\Theta(\ell)$ remaining $2$-loops to color with color $\ell$ for $\ell\in[k]$. This is possible by definition of $\overline{d_\ell}$ and because
\begin{align*}
    \sum_{\ell\in[k]}(\rho_\ell - |M_\ell| - 2\dg_\Theta(\ell) - d_\ell - \overline{d_\ell})
    &= n^2 - r^2 - 2r(n-r) - (n-r)\\
    &= 2\binom{n-r}{2}= 2\mult_G(\alpha^2).
\end{align*}
Now we have that for $\ell\in[k]$,
\begin{align*}
    |M_\ell| + |E^1(G(\ell))| + 2|E^2(G(\ell))| = |M_\ell| + 2\dg_\Theta(\ell) + \mult_{G(\ell)}(\alpha) + 2\mult_{G(\ell)}(\alpha^2) = \rho_\ell.
\end{align*}
Finally, for $\ell\in[k]$,
  \begin{align*}
\frac{\dg_{G(\ell)}(\alpha)}{n-r} &=\frac{1}{n-r}\left(\mult_{G(\ell)}(\alpha X)+ \mult_{G(\ell)}(\alpha)+ 2\mult_{G(\ell)}(\alpha^2)\right) \\
 &= \frac{1}{n-r}\left(\dg_\Theta(\ell)+d_\ell+\overline{d_\ell} + (\rho_\ell-|M_\ell|-d_\ell-\overline{d_\ell}-2\dg_\Theta(\ell))\right)\\
 &= \frac{1}{n-r}\left(\rho_\ell-|M_\ell|-\dg_\Theta(\ell)\right)\\
 &\approx \frac{\rho_\ell}{n} \text{ by \eqref{AHThetaconditions}},
\end{align*}
so \eqref{degeqG} holds. This completes the proof of Claim 2.\\

Let $\tilde{\Gamma}[X,[k]]$ be the simple bipartite subgraph of $\Gamma$ whose edge set consists of exactly one edge $x_i\ell$ for $i\in[r]$ and each free color $\ell$ such that $|M_\ell^i| <\roundup{\rho_\ell/n}$. In other words,
\begin{align*}
    E(\tilde{\Gamma}) = \Big\{ x_i\ell \ : \ i\in [r], \ell\in[k], |M_\ell^i| < \roundup{\frac{\rho_\ell}{n}}, n\nmid\rho_\ell\Big\}.
\end{align*}
The edges in the bigraph $\tilde{\Gamma}$ indicate whether a free symbol $\ell\in[k]$ will occur a total of $\roundup{\rho_\ell/n}$ or $\rounddown{\rho_\ell/n}$ times in each row of the equitable $\brho$-Latin square to which $M$ is being extended. Since each forced symbol must occur exactly $\rho_\ell/n$ times in every row of an equitable $\brho$-Latin square, $\tilde{\Gamma}$ does not include any edges $x_i\ell$ for forced colors $\ell\in[k]$.\\

\noindent {\bf Claim 3. }
There exists a subgraph $\Theta\subseteq\Gamma$ satisfying \eqref{AHThetaconditions} if and only if there exists $\tilde{\Theta}\subseteq\tilde{\Gamma}$ for which the following hold.
\begin{align} \label{AHtildeThetaconditions}
    \begin{cases}
        \dg_{\tilde{\Theta}}(x_i) = \displaystyle \sum_{\ell\in[k]}\roundup{\frac{\rho_\ell}{n}} - n &\text{ for } i\in[r],\vspace{1mm}\\
        \dg_{\tilde{\Theta}}(\ell) \leq n\roundup{\dfrac{\rho_\ell}{n}} - \rho_\ell &\text{ for }\ell\in[k], \vspace{1mm}\\
        \dg_{\tilde{\Theta}}(\ell) \geq n\rounddown{\dfrac{\rho_\ell}{n}}  + r\left(\roundup{\dfrac{\rho_\ell}{n}} - \rounddown{\dfrac{\rho_\ell}{n}}\right) - \rho_\ell &\text{ for }\ell\in[k], \vspace{1mm}\\
        \dg_{\tilde{\Theta}}(\ell) \geq r\roundup{\dfrac{\rho_\ell}{n}} - \rounddown{\dfrac{\rho_\ell + |M_\ell| - d_\ell}{2}} &\text{ for }\ell\in [k].
    \end{cases}
\end{align}
Note that just as \eqref{AHThetaconditions} results in one lower bound and two upper bounds on $\dg_\Theta(\ell)$, we have that \eqref{AHtildeThetaconditions} results in one upper bound and two lower bounds on $\dg_{\tilde{\Theta}}(\ell)$.

\noindent To prove this claim, first suppose that $\Theta\subseteq\Gamma$ satisfying \eqref{AHThetaconditions} exists. Let $\tilde{\Theta}$ be the graph with vertex set $X\cup [k]$ and edge set $E(\Gamma)\backslash E(\Theta)$. Then $\mult_{\tilde{\Theta}}(x_i\ell) \leq \roundup{\rho_\ell/n}-\rounddown{\rho_\ell/n}$ for $i\in[r],\ell\in[k]$, so $\mult_{\tilde{\Theta}}(x_i\ell)=0$ when $n\mid \rho_\ell$,  and $\mult_{\tilde{\Theta}}(x_i\ell)\leq 1$ when $n \nmid \rho_\ell$. Hence, $\tilde{\Theta}$ is a subgraph of $\tilde{\Gamma}$. Moreover, for $i\in[r]$ and $\ell\in[k]$, we have that 
\begin{align*}
    \dg_{\tilde{\Theta}}(x_i) &= \dg_\Gamma(x_i) - \dg_\Theta(x_i)= \sum_{\ell\in[k]}\roundup{\dfrac{\rho_\ell}{n}} - r - \left(n-r\right)= \sum_{\ell\in[k]}\roundup{\dfrac{\rho_\ell}{n}} - n,
\end{align*}
and we have that
\begin{align*}
    \dg_{\tilde{\Theta}}(\ell) &= \dg_\Gamma(\ell) - \dg_\Theta(\ell)= r\roundup{\dfrac{\rho_\ell}{n}} - |M_\ell| - \dg_\Theta(\ell).
\end{align*}
Hence, by \eqref{AHThetaconditions}, we have for $i\in[r]$ and $\ell\in[k]$ that
\begin{align*}
    \dg_{\tilde{\Theta}}(\ell) &\leq r\roundup{\dfrac{\rho_\ell}{n}} - |M_\ell| - \left(\rho_\ell - |M_\ell| - (n-r)\rounddown{\dfrac{\rho_\ell}{n}}\right)= n\rounddown{\dfrac{\rho_\ell}{n}} - \rho_\ell + r\left(\roundup{\dfrac{\rho_\ell}{n}} - \rounddown{\dfrac{\rho_\ell}{n}}\right),
\end{align*}
that
\begin{align*}
    \dg_{\tilde{\Theta}}(\ell) &\geq r\roundup{\dfrac{\rho_\ell}{n}} - |M_\ell| - \left(\rho_\ell - |M_\ell| - (n-r)\roundup{\dfrac{\rho_\ell}{n}}\right)= n\roundup{\dfrac{\rho_\ell}{n}} -\rho_\ell,
\end{align*}
and that
\begin{align*}
    \dg_{\tilde{\Theta}}(\ell) &\geq r\roundup{\dfrac{\rho_\ell}{n}} - |M_\ell| - \rounddown{\dfrac{\rho_\ell - |M_\ell| - d_\ell}{2}} = r\roundup{\dfrac{\rho_\ell}{n}} - \rounddown{\dfrac{\rho_\ell + |M_\ell| - d_\ell}{2}},
\end{align*}
satisfying the one upper bound and two lower bounds on $\dg_{\tilde{\Theta}}(\ell)$ resulting from \eqref{AHtildeThetaconditions}. Hence, we now have that $\tilde{\Theta}$ satisfies \eqref{AHtildeThetaconditions}. 

\noindent Conversely, suppose that $\tilde{\Theta}\subseteq\tilde{\Gamma}$ satisfying \eqref{AHtildeThetaconditions} exists.
As $\tilde{\Gamma}\subseteq\Gamma$, we have that $\tilde{\Theta}\subseteq\Gamma$. Let $\Theta$ be the subgraph of $\Gamma$ with edge multi-set $E(\Gamma)\backslash E(\tilde{\Theta})$. Then as
\begin{align*}
    \mult_{\tilde{\Gamma}}(x_i\ell) = 
    \begin{cases}
        1 &\text{ if }n\nmid \rho_\ell \text{ and }\dg_{F(\ell)}(x_i)<\roundup{\dfrac{\rho_\ell}{n}},\\
        0 &\text{ otherwise},
    \end{cases}
\end{align*}
it follows that $\mult_{\tilde{\Theta}}(x_i\ell)=0$ if $n\mid \rho_\ell$ and $\mult_{\tilde{\Theta}}(x_i\ell)\leq 1$ if $n\nmid \rho_\ell$. Hence,
\begin{align*}
    \mult_{\tilde{\Theta}}(x_i\ell) \leq \roundup{\rho_\ell/n}-\rounddown{\rho_\ell/n} \quad \text{ for }i\in[r],\ell\in[k],
\end{align*}
and so as $\dg_\Theta(x_i)=\dg_\Gamma(x_i) - \dg_{\tilde{\Theta}}(x_i)$ and $\dg_\Theta(\ell)=\dg_\Gamma(\ell) - \dg_{\tilde{\Theta}}(\ell)$ for $i\in[r], \ell\in[k]$, we have by \eqref{AHtildeThetaconditions} that $\Theta$ satisfies \eqref{AHThetaconditions}. This completes the proof of Claim 3.\\

Let $Y$ be the set of all free colors $\ell\in[k]$. Observe that the degree of each forced color $\ell\in[k]$ is $0$ in $\tilde{\Gamma}$. Hence, the existence of $\tilde{\Theta}\subseteq\tilde{\Gamma}$ is equivalent to the existence of $\tilde{\Theta}[X,Y]\subseteq\tilde{\Gamma}[X,Y]$, where $\tilde{\Theta}[X,Y]$ and $\tilde{\Gamma}[X,Y]$ are the subgraphs of $\tilde{\Theta}$ and $\tilde{\Gamma}$, respectively, induced by the vertex set $[X,Y]$.

Now we define the following functions.
\begin{align*}
    \begin{cases}
    g,f: X\cup Y\rightarrow \mathbb{N}\cup \{0\},\vspace{1mm}\\
    g(x_i)=f(x_i)=\displaystyle\sum_{\ell\in[k]}\roundup{\dfrac{\rho_\ell}{n}} - n& \text{ for } i\in [r],\\
    f(\ell)= n\roundup{\dfrac{\rho_\ell}{n}} - \rho_\ell &\text{ for } \ell\in Y,\vspace{1mm}\\
    g(\ell)= \max\Big\{n\rounddown{\dfrac{\rho_\ell}{n}}  + r\left(\roundup{\dfrac{\rho_\ell}{n}} - \rounddown{\dfrac{\rho_\ell}{n}}\right) - \rho_\ell, r\roundup{\dfrac{\rho_\ell}{n}}  - \rounddown{\dfrac{\rho_\ell +|M_\ell| - d_\ell}{2}}, 0\Big\} &\text{ for } \ell\in Y.
    \end{cases}
\end{align*}
Note that as $\sum_{\ell\in[k]}\rho_\ell = n^2$, we have that $f(x_i)= g(x_i)\geq 0$ for $i\in[r]$. Clearly, $f(\ell)\geq 0$ for $\ell\in Y$. We also have that for $\ell\in[k]$,
\begin{align*}
    (n-r)\Big(\roundup{\dfrac{\rho_\ell}{n}}-\rounddown{\dfrac{\rho_\ell}{n}}\Big)= n\roundup{\dfrac{\rho_\ell}{n}} - \rho_\ell -\Big(n\rounddown{\dfrac{\rho_\ell}{n}} + r\Big(\roundup{\dfrac{\rho_\ell}{n}}-\rounddown{\dfrac{\rho_\ell}{n}}\Big)\Big) \geq 0,
\end{align*}
and by \eqref{ncond1} we have that for $\ell\in[k]$,
\begin{align*}
    n\roundup{\dfrac{\rho_\ell}{n}} - \rho_\ell - r\roundup{\dfrac{\rho_\ell}{n}} + \rounddown{\dfrac{\rho_\ell + |M_\ell| - d_\ell}{2}} &\geq n\roundup{\dfrac{\rho_\ell}{n}} - \rho_\ell + \dfrac{1}{2}\left(\rho_\ell +|M_\ell|\right) - r\roundup{\dfrac{\rho_\ell}{n}} - \dfrac{1}{2}d_\ell\\
    &= (n-r)\roundup{\dfrac{\rho_\ell}{n}} - \dfrac{1}{2}d_\ell - \dfrac{1}{2}\left(\rho_\ell - |M_\ell|\right) \\&\geq 0
\end{align*}
and so $f(\ell)\geq g(\ell)$ for $\ell\in Y$. 
The existence of $\tilde{\Theta}[X,Y]\subseteq \tilde{\Gamma}[X,Y]$ for which \eqref{AHtildeThetaconditions} holds is equivalent to the existence of a $(g,f)$-factor in $\tilde{\Gamma}[X,Y]$. For $U\subseteq X$ and $\ell\in Y\backslash N_{\tilde{\Gamma}}(U)$, $\mult_{\tilde{\Gamma}}(\ell U) = 0$, and for $K\subseteq Y$ and $u\in X\backslash N_{\tilde{\Gamma}}(K)$, $\mult_{\tilde{\Gamma}}(u K) = 0$. Hence, we have by Theorem \ref{gffactorthm2} that $\tilde{\Gamma}[X,Y]$ has a $(g,f)$-factor if and only if the following conditions hold.
\begin{align*}
    \sum_{u \in U}g(u) &\leq \sum_{\ell\in Y} \min \Big\{ f(\ell), \mult_{\tilde{\Gamma}}(\ell U)\Big\} \quad &&\text{ for } U\subseteq X,\\
    \sum_{\ell\in K}g(\ell) &\leq \sum_{u\in X} \min \Big\{ f(u), \mult_{\tilde{\Gamma}}(u K)\Big\} \quad &&\text{ for } K\subseteq Y.
\end{align*}
As $\mult_{\tilde{\Gamma}}(\ell U) = \eta_I(\ell)$ for $I:=\{i\in[r] \ : \ x_i\in U\}$ and $\mult_{\tilde{\Gamma}}(uK) = \eta_K(i)$ for $i$ such that $u=x_i$, these two conditions are equivalent to the following.
\begin{align*}
    &|I|\Big(\sum_{\ell\in[k]}\roundup{\dfrac{\rho_\ell}{n}}-n\Big) \leq \sum_{\ell\in Y}\min\Big\{n\roundup{\dfrac{\rho_\ell}{n}} - \rho_\ell, \eta_I(\ell)\Big\} \quad &&\text{ for } I\subseteq[r],\\
    &\sum_{\ell\in K}\max\Big\{r\roundup{\dfrac{\rho_\ell}{n}}  - \rounddown{\frac{\rho_\ell + |M_\ell| - d_\ell}{2}}, r\Big(\roundup{\dfrac{\rho_\ell}{n}} - \rounddown{\dfrac{\rho_\ell}{n}}\Big) + n\rounddown{\dfrac{\rho_\ell}{n}} - \rho_\ell,0\Big\}\\
    &\quad\quad\quad \leq \sum_{i\in [r]} \min\Big\{\sum_{\ell\in[k]}\roundup{\dfrac{\rho_\ell}{n}} - n, \eta_K(i)\Big\}
    &&\text{ for } K\subseteq Y.\nonumber
\end{align*}
These are precisely the remaining necessary conditions \eqref{ncond4} and \eqref{ncond5} in Theorem \ref{mainthm}.
\end{proof}

\begin{remark}\label{rmkequivcond}\textup{
By Theorem \ref{gffactorthm1}, the following is also equivalent to the final two necessary conditions of Theorem \ref{mainthm}.
\begin{align*}
&\text{For }I\subseteq[r],K\subseteq[k],\\
    &(r-|I|)\Big(\sum_{\ell\in[k]}\roundup{\dfrac{\rho_\ell}{n}} - n\Big) + \sum_{\ell\in \bar{K}}\left(n\roundup{\dfrac{\rho_\ell}{n}} - \rho_\ell\right) \geq 
    \sum_{i\in I} \max\left\{ \sum_{\ell\in[k]}\roundup{\dfrac{\rho_\ell}{n}} - n - \eta_{K}(i),0 \right\} +\\
    &\quad \sum_{\ell\in K} \max\left\{ n\rounddown{\dfrac{\rho_\ell}{n}}  + r\left(\roundup{\dfrac{\rho_\ell}{n}} - \rounddown{\dfrac{\rho_\ell}{n}}\right) - \rho_\ell - \eta_{I}(\ell), r\roundup{\dfrac{\rho_\ell}{n}}  - \rounddown{\dfrac{\rho_\ell + |M_\ell| - d_\ell}{2}}-\eta_{I}(\ell), 0 \right\}.
\end{align*}
}
\end{remark}

\section{Corollaries} \label{s:cor}

Our main result implies the following result for constructing equitable $\brho$-Latin squares.
\begin{corollary} \label{corr=0}
    Let $k,n\in\mathbb{N}$ and $k\leq n^2$. Let $\brho=(\rho_1,\dots,\rho_k)$ such that $\rho_\ell\in\mathbb{N}$ for $\ell\in[k]$ and $\sum_{\ell\in[k]}\rho_\ell = n^2$, and let $\vb*{d}=(d_1,\dots,d_k)$ such that $d_\ell\in\mathbb{N}\cup\{0\}$ for $\ell\in[k]$ and $\sum_{\ell\in[k]}d_\ell \leq n$. Then it is possible to construct an $n\times n$ symmetric equitable $\brho$-Latin square on $[k]$ which has the prescribed diagonal $\vb*{d}$ if and only if the following hold.
\begin{align*}
    &d_\ell \leq \rho_\ell \quad \text{ for }\ell\in[k],\\
    &\Big|\{\ell\in[k] \ : \ \rho_\ell \nequiv d_\ell  \Mod 2\}\Big| \leq n - \sum_{\ell\in[k]}d_\ell,\\
    &n - \sum_{\ell\in[k]}d_\ell \equiv \Big|\{\ell\in[k] \ : \ \rho_\ell \nequiv d_\ell \Mod 2\}\Big| \Mod 2.
\end{align*}
\end{corollary}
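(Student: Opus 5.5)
We specialize Theorem \ref{mainthm} to $r=0$. Then $M$ is the empty $0\times 0$ array, which is trivially an externally symmetric equitable $\brho$-Latin rectangle, and $|M_\ell|=0$ for every $\ell\in[k]$. When $r=0$ there is no subarray $M$ to be "off", so an array that is symmetric off $M$ is simply a symmetric array. Likewise, the diagonal tail is the whole diagonal. Hence the existence of an $n\times n$ symmetric equitable $\brho$-Latin square with prescribed diagonal $\vb*{d}$ is equivalent to $M$ being equitably $(\brho,\vb*{d})$-admissible together with \eqref{ncond4} and \eqref{ncond5}. It therefore remains to evaluate each condition at $r=0$.

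Conditions \eqref{ncond0}, \eqref{ncond2} and \eqref{ncond3} become, verbatim, the three displayed conditions of the corollary, since $|M_\ell|=0$ and $\rho_\ell-d_\ell\not\equiv 0$ means $\rho_\ell\not\equiv d_\ell \Mod 2$. Condition \eqref{ncond4} is quantified over $I\subseteq[0]=\emptyset$, so its left side is $0$ and its right side is a sum of nonnegative terms (note $\eta_\emptyset(\ell)=0$ and $n\roundup{\rho_\ell/n}-\rho_\ell\ge 0$); it holds trivially.

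For \eqref{ncond5}, the right side is an empty sum over $i\in[0]$, hence $0$. So we must check that each term of the left-hand max is at most $0$, which makes the max equal to $0$:
\begin{align*}
r\roundup{\tfrac{\rho_\ell}{n}}-\rounddown{\tfrac{\rho_\ell-d_\ell}{2}} &= -\rounddown{\tfrac{\rho_\ell-d_\ell}{2}}\le 0 \quad\text{using } d_\ell\le\rho_\ell,\\
n\rounddown{\tfrac{\rho_\ell}{n}}-\rho_\ell &\le 0.
\end{align*}
The first bound needs $d_\ell\le\rho_\ell$, which is \eqref{ncond0}. So the sufficiency direction uses the first displayed condition; in the necessity direction that condition is already among the necessary ones. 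Either way, \eqref{ncond5} holds automatically.

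The only step requiring care is \eqref{ncond1}. At $r=0$ it reads
\[
n\rounddown{\rho_\ell/n}\le\rho_\ell\le 2n\roundup{\rho_\ell/n}-d_\ell .
\]
The left inequality is immediate. For the right inequality, use $d_\ell\le n$ (from $\sum_\ell d_\ell\le n$) and $d_\ell\le\rho_\ell\le n\roundup{\rho_\ell/n}$, which give
\[
\rho_\ell+d_\ell\le n\roundup{\rho_\ell/n}+\min\{n,\rho_\ell\}\le 2n\roundup{\rho_\ell/n}.
\]
So \eqref{ncond1} follows from the first displayed condition and the hypothesis on $\sum d_\ell$. With all conditions of Theorem \ref{mainthm} reduced in this way, the corollary follows from Theorem \ref{mainthm} in both directions.
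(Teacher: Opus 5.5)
Your proposal is correct and follows the paper's proof: specialize Theorem~\ref{mainthm} to $r=0$, so that \eqref{ncond0}, \eqref{ncond2}, \eqref{ncond3} become the three displayed conditions and \eqref{ncond1}, \eqref{ncond4}, \eqref{ncond5} hold automatically. You are slightly more careful than the paper, which calls \eqref{ncond1} and \eqref{ncond5} ``trivial'': you make explicit that the right inequality of \eqref{ncond1} follows from $d_\ell\le n$ (or $d_\ell\le\rho_\ell$), and that the vanishing of the max in \eqref{ncond5} uses $d_\ell\le\rho_\ell$.
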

\begin{proof}
    When $r=0$, conditions \eqref{ncond0}, \eqref{ncond2}, and \eqref{ncond3} of Theorem \ref{mainthm} simplify to the three necessary conditions of Corollary \ref{corr=0}, condition \eqref{ncond1} of Theorem \ref{mainthm} is trivially true, condition \eqref{ncond4} of Theorem \ref{mainthm} is vacuously true, and \eqref{ncond5} of Theorem \ref{mainthm} reduces to a trivial statement. Note that an $n\times n$ square which is symmetric off an empty $0\times 0$ rectangle is symmetric. By Theorem \ref{mainthm}, the result follows.
\end{proof}

As noted in the introduction, Andersen's theorem follows from our main result.
\begin{corollary}\cite[Theorem 11]{MR772580} \label{cor:andersen}
    Let $\vb*{d}=(d_1,\dots,d_n)$ such that $\sum_{\ell\in[n]}d_\ell\leq n-r$. Then an $r\times r$ externally symmetric Latin rectangle $M$ on $[n]$ can be extended to an $n\times n$ Latin square which is symmetric off $M$ and has the prescribed diagonal tail $\vb*{d}$ if and only if the following hold.
        \begin{align*}
            &|M_\ell| - d_\ell \geq 2r-n \quad &&\text{ for } \ell \in [n],\\
            &|M_\ell|+d_\ell \equiv n \Mod 2 \quad &&\text{ for at least } r + \sum_{\ell\in[n]}d_\ell \text{ symbols } \ell\in[n].
        \end{align*}
\end{corollary}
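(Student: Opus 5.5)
Andersen's theorem is the special case $k=n$, $\rho_\ell=n$ for all $\ell\in[n]$ of Theorem~\ref{mainthm}. The plan is to show that, under this specialization, admissibility together with \eqref{ncond4} and \eqref{ncond5} is equivalent to the two displayed conditions. First observe that an equitable $\brho$-Latin rectangle (square) with $\rho_\ell=n$ is exactly a Latin rectangle (square). Since $\roundup{\rho_\ell/n}=\rounddown{\rho_\ell/n}=1$, every symbol is forced, so $Y=\emptyset$. Moreover $\sum_{\ell}\roundup{\rho_\ell/n}-n=0$. Hence \eqref{ncond4} reads $0\le 0$, and \eqref{ncond5} is vacuous (only $K=\emptyset$ is allowed). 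It therefore remains to simplify \eqref{ncond0}--\eqref{ncond3}.

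Next rewrite each of these conditions with $\rho_\ell=n$.

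\begin{itemize}
\item \eqref{ncond1} becomes $n-r\le n-|M_\ell|\le 2(n-r)-d_\ell$. The left inequality is automatic, since $|M_\ell|\le r$ in a Latin rectangle. The right inequality is exactly $|M_\ell|-d_\ell\ge 2r-n$.
\item \eqref{ncond0} reads $|M_\ell|+d_\ell\le n$. This is implied by the rest: $|M_\ell|\le r$ and $d_\ell\le\sum d_\ell\le n-r$, so $|M_\ell|+d_\ell\le n$.
\item \eqref{ncond2} and \eqref{ncond3} concern the set $O$ of symbols with $n-|M_\ell|-d_\ell$ odd, that is, with $|M_\ell|+d_\ell\not\equiv n \pmod 2$. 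They say $|O|\le n-r-\sum d_\ell$ and $|O|\equiv n-r-\sum d_\ell \pmod 2$.
\end{itemize}

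The first of these is equivalent to the number of symbols with $|M_\ell|+d_\ell\equiv n$ being at least $n-(n-r-\sum d_\ell)=r+\sum d_\ell$, which is Andersen's second condition.

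The main obstacle is the parity condition \eqref{ncond3}, which does not appear among Andersen's hypotheses; it must be shown to hold automatically. Sum $n-|M_\ell|-d_\ell$ over all $\ell\in[n]$ to get $n^2-r^2-\sum d_\ell$. Modulo $2$ this sum is congruent to $|O|$. Since $n^2-r^2\equiv n-r \pmod 2$, it follows that $|O|\equiv n-r-\sum d_\ell \pmod 2$ always. So \eqref{ncond3} is automatic, and Theorem~\ref{mainthm} yields the corollary in both directions.
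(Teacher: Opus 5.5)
Your proposal is correct and follows essentially the same route as the paper. You specialize Theorem~\ref{mainthm} to $k=n$ and $\rho_\ell=n$, so that $Y=\emptyset$ makes \eqref{ncond4} and \eqref{ncond5} trivial; you reduce \eqref{ncond0}--\eqref{ncond2} to Andersen's two conditions; and you use $\sum_{\ell}|M_\ell|=r^2\equiv r \pmod 2$ to show that \eqref{ncond3} holds automatically, where your sum of $n-|M_\ell|-d_\ell$ over $\ell$ is a slightly cleaner phrasing of the paper's congruence.
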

\begin{proof}
    First, recall that an externally symmetric Latin rectangle is an externally symmetric equitable $\brho$-Latin rectangle with $\brho=(n,\dots,n)$. Hence, for $\ell\in[n]$, we have that $\rho_\ell = n$, $|M_\ell|\leq r$, and $d_\ell\leq n-r$, so \eqref{ncond0} follows immediately and \eqref{ncond1} is equivalent to the following.
    \begin{align*}
        2r-n \leq |M_\ell| - d_\ell.
    \end{align*}
    We also have that
    \begin{align*}
        \Big|\{\ell\in[n] \ : \ \rho_\ell - |M_\ell| - d_\ell \not\equiv 0\Mod 2\}\Big| = n - \Big|\{\ell\in[n] \ : \ |M_\ell| + d_\ell \equiv n\Mod 2\}\Big|,
    \end{align*}
    so \eqref{ncond2} is equivalent to the following.
    \begin{align*}
        |M_\ell| + d_\ell \equiv n\Mod 2 \text{ for at least }r+\sum_{\ell\in[n]}d_\ell \text{ symbols }\ell\in[n],
    \end{align*}
   and it also follows that
    \begin{align*}
        n \equiv \sum_{\ell\in [k]}(|M_\ell|+d_\ell) + \Big|\{\ell \ : \ n \not\equiv |M_\ell|+d_\ell \Mod 2\}\Big| \Mod 2.
    \end{align*}
    As $\sum_{\ell\in[k]}|M_\ell| = r^2$ and $r^2\equiv r \Mod 2$, this is equivalent to \eqref{ncond3}.
    Condition \eqref{ncond4} of Theorem \ref{mainthm} reduces to a trivial statement when $\rho_\ell=n$ for $\ell\in[k]$. Moreover, since $\rho_\ell=n$ for $\ell\in[k]$, all colors in $[k]$ are forced colors, and so condition \eqref{ncond5} holds vacuously. Hence, we may apply Theorem \ref{mainthm} with $\brho=(n,\dots,n)$ to obtain the result.
\end{proof}

In our main result, we do not require all diagonal entries to be prescribed. The need to make a distinction between symbols for which $\rho_\ell$ is odd and symbols for which $\rho_\ell$ is even complicates the proof of our main result. As in Hoffman's theorem, we may instead completely prescribe the diagonal to avoid this complication. For $\vb*{d}$ such that $\sum_{\ell\in[k]}d_\ell=n-r$, conditions \eqref{ncond2} and \eqref{ncond3} simplify to $\rho_\ell - |M_\ell| - d_\ell \equiv 0\Mod 2$ for $\ell\in[k]$. On the other hand, as in Cruse's theorem, we may also leave all cells of the diagonal tail unprescribed (we have done so in the following corollary). For $\vb*{d}$ such that $\sum_{\ell\in[k]}d_\ell=0$, not only does \eqref{ncond0} become trivial, but the remaining necessary conditions of our main result are also simplified.

In the remainder of this section, we define $D$ to be the sum of $d_\ell$ over all $\ell\in[k]$ and $q$ to be the number of symbols such that $\rho_\ell + |M_\ell| + d_\ell$ is odd. In other words, 
\begin{align*}
    D=\sum_{\ell\in[k]}d_\ell,  \text{ and }  q = \left|\{\ell\in[k] \ : \ (\rho_\ell - |M_\ell| - d_\ell) \not\equiv 0\Mod 2\}\right|.
\end{align*} 
Note that \eqref{ncond2} and \eqref{ncond3} of Theorem \ref{mainthm} both hold if and only if $(n-r-D-q)/2$ is a non-negative integer. 

\begin{corollary} \label{cor2}
    An $r\times r$ externally symmetric equitable $\brho$-Latin rectangle $M$ on $[k]$ can be extended to an $n\times n$ equitable $\brho$-Latin square which is symmetric off $M$ if and only if the following conditions hold.
    \begin{align*}
        &(n-r)\rounddown{\dfrac{\rho_\ell}{n}} \leq \rho_\ell - |M_\ell| \leq 2(n-r)\roundup{\dfrac{\rho_\ell}{n}} &&\text{ for } \ell\in[k],\\
        &|I|\Big(\sum_{\ell\in[k]}\roundup{\dfrac{\rho_\ell}{n}}-n\Big) \leq \sum_{\ell\in[k]}\min\Big\{n\roundup{\dfrac{\rho_\ell}{n}}  - \rho_\ell, \eta_I(\ell)\Big\}  \quad &&\text{ for }I\subseteq[r],\\
        &\sum_{\ell\in K}\max\Big\{r\roundup{\dfrac{\rho_\ell}{n}} -  \rounddown{\dfrac{\rho_\ell + |M_\ell|}{2}}, r\Big(\roundup{\dfrac{\rho_\ell}{n}} - \rounddown{\dfrac{\rho_\ell}{n}}\Big) + n\rounddown{\dfrac{\rho_\ell}{n}} - \rho_\ell,0\Big\}\\
        &\quad \quad \quad \leq \sum_{i\in [r]} \min\Big\{\sum_{\ell\in[k]}\roundup{\dfrac{\rho_\ell}{n}} - n, \eta_K(i)\Big\} && \text{ for }K\subseteq Y,
\end{align*}
where $Y=\{\ell\in[k] \ : \ n\nmid \rho_\ell\}$, and $(n-r-q)/2$ is a non-negative integer.
\end{corollary}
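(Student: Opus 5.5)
This corollary should follow by specializing Theorem \ref{mainthm} to the all-zero diagonal tail $\vb*{d}=(0,\dots,0)$. The key observation is that extending $M$ to an equitable $\brho$-Latin square that is symmetric off $M$, with no prescription on the diagonal, is the same as doing so with the prescribed diagonal tail $\vb*{d}=\vb*{0}$. The reason is that ``at least $0$ occurrences of $\ell$ on the diagonal of $C$'' imposes no constraint at all. Moreover, $\sum_\ell d_\ell=0\le n-r$, so Theorem \ref{mainthm} applies. Hence $M$ can be extended if and only if $M$ is equitably $(\brho,\vb*{0})$-admissible and \eqref{ncond4} and \eqref{ncond5} hold with $d_\ell=0$. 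It remains to check that these conditions are exactly the ones listed in Corollary \ref{cor2}.

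I will go through the conditions one at a time.
\begin{enumerate}
\item Condition \eqref{ncond0} becomes $|M_\ell|\le\rho_\ell$. This is implied by the left inequality of the first listed condition, since $(n-r)\rounddown{\rho_\ell/n}\ge 0$. So it can be dropped, or equivalently it is automatic from the corollary's hypotheses.
\item Condition \eqref{ncond1} with $d_\ell=0$ is verbatim the first displayed condition of the corollary.
\item Conditions \eqref{ncond2} and \eqref{ncond3} with $D=0$ say $q\le n-r$ and $q\equiv n-r \Mod 2$. As noted just before the corollary, together they are equivalent to $(n-r-q)/2$ being a non-negative integer.
\item Condition \eqref{ncond4} does not involve $\vb*{d}$. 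The only difference is that the corollary sums over $\ell\in[k]$ rather than $\ell\in Y$. These agree because, for a forced symbol $\ell$, we have $n\roundup{\rho_\ell/n}-\rho_\ell=0$ and $\eta_I(\ell)=0$ by definition, so the extra summands vanish.
\item Condition \eqref{ncond5} with $d_\ell=0$ replaces $\rounddown{(\rho_\ell+|M_\ell|-d_\ell)/2}$ by $\rounddown{(\rho_\ell+|M_\ell|)/2}$, which is exactly the third displayed condition.
\end{enumerate}

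There is no real obstacle here; the proof is bookkeeping. The only points needing a sentence of justification are two. First, the unprescribed diagonal coincides with the tail $\vb*{0}$. Second, the index sets in \eqref{ncond4} can be harmlessly changed from $Y$ to $[k]$, and the redundancy of \eqref{ncond0} should be noted. Once these are recorded, the corollary follows directly from Theorem \ref{mainthm}.
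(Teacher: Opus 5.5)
Your proposal is correct and is essentially the paper's own proof: specialize Theorem~\ref{mainthm} to the zero diagonal tail and check that \eqref{ncond0}--\eqref{ncond5} reduce to the listed conditions. You are slightly more careful than the paper in two places. You note explicitly that the sum in \eqref{ncond4} may be taken over $[k]$ rather than $Y$, since forced symbols contribute $\min\{0,0\}=0$. You also point out that \eqref{ncond0} is redundant; the paper attributes this to the definition of an equitable $\brho$-Latin rectangle rather than to \eqref{ncond1}, and either justification works.
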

\begin{proof}
    As $d_\ell=0$ for $\ell\in[k]$, \eqref{ncond0} is trivial by definition of equitable $\brho$-Latin rectangles. As $D=0$, \eqref{ncond2} and \eqref{ncond3} are simplified to the condition that $(n-r-q)/2$ be a non-negative integer. Finally, the remaining necessary conditions in Theorem \ref{mainthm} are simplified to the remaining necessary conditions in Corollary \ref{cor2}. We apply Theorem \ref{mainthm} with $\sum_{\ell\in[k]}d_\ell = 0$ to obtain this result.
\end{proof}

The final two necessary conditions of Theorem \ref{mainthm} are conditions that apply to all possible subsets of the sets of rows and free symbols, respectively, in a given equitable $\brho$-Latin rectangle. In the remainder of this section, our goal is to simplify these conditions in special cases. We note first of all that when a symbol is free, there is a choice to be made as to how many times it occurs in each row and column of an equalized $\brho$-latin square. By requiring all symbols to be forced, we eliminate this choice. As a result, the necessary conditions for the existence of the desired $(g,f)$-factor in the proof of Theorem \ref{mainthm} simplify a great deal, as in the following corollary.
\begin{corollary} \label{cor3}
Let $n\mid\rho_\ell$ for $\ell\in[k]$. Then an $r\times r$ externally symmetric equitable $\brho$-Latin rectangle $M$ on $[k]$ can be extended to an $n \times n$ equitable $\brho$-Latin square which is symmetric off $M$ and has the prescribed diagonal tail $\vb*{d}$ if and only if $(n-r-D-q)/2$ is a non-negative integer and the following hold.
\begin{align*}
    &\rho_\ell \geq |M_\ell| + d_\ell &&\text{ for }\ell\in[k],\\
    &(n-r)\dfrac{\rho_\ell}{n} \leq \rho_\ell - |M_\ell| \leq 2(n-r)\dfrac{\rho_\ell}{n} - d_\ell &&\text{ for }\ell\in[k].
\end{align*}
\end{corollary}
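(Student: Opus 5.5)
We apply Theorem \ref{mainthm} directly and simplify its conditions under the hypothesis $n\mid\rho_\ell$ for every $\ell\in[k]$. In this setting every symbol is forced, so $Y=\{\ell\in[k] : n\nmid\rho_\ell\}=\emptyset$ and $\roundup{\rho_\ell/n}=\rounddown{\rho_\ell/n}=\rho_\ell/n$ for all $\ell$. The plan has two parts: show that \eqref{ncond4} and \eqref{ncond5} hold automatically, and then rewrite the admissibility conditions \eqref{ncond0}--\eqref{ncond3} in the stated form.

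\emph{Condition \eqref{ncond5}.} Since $Y=\emptyset$, the only admissible $K\subseteq Y$ is $K=\emptyset$. The left side is then an empty sum, equal to $0$. The right side is a sum of minima of nonnegative quantities: $\eta_\emptyset(i)=0$, and $\sum_\ell\roundup{\rho_\ell/n}-n\ge 0$. So \eqref{ncond5} reads $0\le 0$.

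\emph{Condition \eqref{ncond4}.} Here $\sum_{\ell}\roundup{\rho_\ell/n}-n=\sum_\ell \rho_\ell/n-n=n^2/n-n=0$, so the left side vanishes for every $I\subseteq[r]$. The right side is an empty sum over $Y$, also $0$. Thus \eqref{ncond4} reduces to $0\le 0$ as well.

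\emph{Admissibility conditions.} Condition \eqref{ncond0} is exactly $\rho_\ell\ge|M_\ell|+d_\ell$. Condition \eqref{ncond1}, after substituting $\roundup{\rho_\ell/n}=\rounddown{\rho_\ell/n}=\rho_\ell/n$, becomes the stated double inequality $(n-r)\rho_\ell/n\le\rho_\ell-|M_\ell|\le 2(n-r)\rho_\ell/n-d_\ell$. Conditions \eqref{ncond2} and \eqref{ncond3} together say $q\le n-r-D$ and $q\equiv n-r-D\Mod 2$, which is equivalent to $(n-r-D-q)/2$ being a non-negative integer, as noted just before Corollary \ref{cor2}.

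Combining these, Theorem \ref{mainthm} gives the claimed equivalence in both directions. There is no real obstacle; the only point needing care is confirming that the vacuous \eqref{ncond5} (only $K=\emptyset$) and the collapse of the coefficient $\sum_\ell\roundup{\rho_\ell/n}-n$ to zero in \eqref{ncond4} both follow from $n\mid\rho_\ell$ together with $\sum_\ell\rho_\ell=n^2$.
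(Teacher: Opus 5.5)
Your proposal is correct and takes the same route as the paper. You apply Theorem~\ref{mainthm}, observe that $Y=\emptyset$ makes \eqref{ncond5} trivial and \eqref{ncond4} collapse to $0\le 0$, and rewrite \eqref{ncond0}--\eqref{ncond3} as the stated conditions, spelling out details the paper leaves implicit (such as $\sum_{\ell}\roundup{\rho_\ell/n}-n=0$ because $\sum_\ell\rho_\ell=n^2$).
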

\begin{proof}
    Let $M$ be an $r\times r$ symmetric equitable $\brho$-Latin rectangle. Then if $n\mid \rho_\ell$, the first three necessary conditions of Corollary \ref{cor3} are equivalent to the necessary conditions for $(\brho,\vb*{d})$-admissibility and condition \eqref{ncond4} reduces to a trivial statement. Moreover, condition \eqref{ncond5} is vacuously true when all colors are forced. 
    We apply Theorem \ref{mainthm} with $\brho$ such that $n\mid \rho_\ell$ for $\ell\in[k]$ to obtain this result.
\end{proof}

\section{Concluding Remarks and Open Problems}\label{s:openp}

Externally symmetric and symmetric squares have the property that each symbol occurs exactly the same number of times in row $i$ as in column $i$. In order to make the squares under consideration ``equitable'' with respect to this property, we could allow the number of occurrences of a symbol in such pairs of corresponding rows and columns to differ by at most $1$, as in the following problem.
\begin{question}
    Let $M$ be an $r\times r$ equitable $\brho$-Latin rectangle such that
    \begin{align} \label{equitablysymdef}
        \Big||M_\ell^i|- |^iM_\ell|\Big|\leq 1 \quad \text{ for }\ell\in[k].
    \end{align}
    Find conditions that ensure $M$ can be extended to an
    \begin{enumerate}
        \item [\textup{(a)}] equitable $\brho$-Latin rectangle that is symmetric off $M$ and has the prescribed diagonal tail $\vb*{d}$.
        \item [\textup{(b)}] externally symmetric equitable $\brho$-Latin square that has the prescribed diagonal tail $\vb*{d}$.
    \end{enumerate}
\end{question}

A Sudoku square is a $9\times 9$ Latin square in which each $3\times 3$ sub-square contains exactly one copy of each symbol in $[9]$. It is not possible to construct a Sudoku square that is symmetric since the symmetry in the top left sub-square would force the structure to contain at least two copies of at least one symbol in every row and column as well as in every sub-square. However, if we allow multiple copies of symbols in rows and columns as in an equitable $\brho$-Latin square, then we may construct a Sudoku-like structure of this sort. For square numbers $n=m^2$, we say an $n\times n$ array $N$ is a symmetric equitable $\brho$-Sudoku square if each row, each column, and each $m\times m$ sub-square each contain approximately $\rho_\ell/n$ copies of each symbol $\ell\in[k]$. An example of one such square is in Table \ref{tab:Sudokuex}.
\begin{table}[h]
    \centering
        \begin{tabular}{|ccc|ccc|ccc|}
        \hline
        1 & 4 & 2 & 3 & 2 & 4 & 3 & 4 & 4\\
        4 & 3 & 4 & 2 & 1 & 4 & 4 & 3 & 2\\
        2 & 4 & 3 & 4 & 4 & 3 & 4 & 2 & 1\\
        \hline
        3 & 2 & 4 & 3 & 4 & 4 & 1 & 4 & 2\\
        2 & 1 & 4 & 4 & 3 & 2 & 4 & 3 & 4\\
        4 & 4 & 3 & 4 & 2 & 1 & 2 & 4 & 3\\
        \hline
        3 & 4 & 4 & 1 & 4 & 2 & 3 & 2 & 4\\
        4 & 3 & 2 & 4 & 3 & 4 & 2 & 1 & 4\\
        4 & 2 & 1 & 2 & 4 & 3 & 4 & 4 & 3\\
        \hline
        \end{tabular}
    \caption{A symmetric equitable $\brho:=(9,18,18,36)$-Sudoku square}
    \label{tab:Sudokuex}
\end{table}
\begin{question}
    Let $n$ be a square number. Under what conditions can an $r\times r$ symmetric equitable $\brho$-Latin $r\times r$ rectangle be extended to an $n\times n$ symmetric equitable $\brho$-Sudoku square?
\end{question}

\section{Acknowledgments}
We thank the anonymous referees for carefully reading the paper and for many detailed and helpful comments that improved the readability and accuracy of the manuscript, as well as for their patient and generous feedback throughout the review process.

\bibliographystyle{plain}
\bibliography{Bibliography}
\end{document}